\def\qed{\hfill $\Box$}
\renewenvironment{proof}[1][\proofname]{\par
  \normalfont
  \topsep6\p@\@plus6\p@ \trivlist
  \item[\hskip\labelsep{\bfseries #1}\@addpunct{\bfseries.}]\ignorespaces
}{
  \endtrivlist
}
\renewcommand{\proofname}{Proof}
\begin{document}
\theoremstyle{plain}
\newtheorem{defi}{Definition}[section]
\newtheorem{lem}[defi]{Lemma}
\newtheorem{thm}[defi]{Theorem}
\newtheorem{rem}[defi]{Remark}
\newtheorem{fact}[defi]{Fact}
\newtheorem{ex}[defi]{Example}
\newtheorem{prop}[defi]{Proposition}
\newtheorem{cor}[defi]{Corollary}
\newtheorem{condition}[defi]{Condition}
\newtheorem{assumption}[defi]{Assumption}
\newcommand{\sign}{\mathop{\rm sign}}
\newcommand{\conv}{\mathop{\rm conv}}
\newcommand{\argmax}{\mathop{\rm arg~max}\limits}
\newcommand{\argmin}{\mathop{\rm arg~min}\limits}
\newcommand{\argsup}{\mathop{\rm arg~sup}\limits}
\newcommand{\arginf}{\mathop{\rm arg~inf}\limits}
\newcommand{\diag}{\mathop{\rm diag}}
\newcommand{\minimize}{\mathop{\rm minimize}\limits}
\newcommand{\maximize}{\mathop{\rm maximize}\limits}
\title{The Dantzig selector for a linear model of diffusion processes}
\author{Kou Fujimori
\\
\\
Waseda University}
\date{}
\maketitle
\begin{abstract}
In this paper, a linear model of diffusion processes with unknown 
drift and diagonal diffusion matrices is discussed.  
We will consider the estimation problems for unknown parameters based on the discrete time observation in high-dimensional and sparse settings. 
To estimate drift matrices, the Dantzig selector which was 
proposed by Cand\'es and Tao in 2007 will be applied.
Then, we will prove two types of consistency of the estimator of drift matrix; one is the consistency in the sense of $l_q$ norm for every $q \in [1,\infty]$ and the other is the variable selection consistency.
Moreover, we will construct an asymptotically normal estimator of the 
drift matrix by using the variable selection consistency of the Dantzig selector. 
\end{abstract}
\section{Introduction}
Let us consider the following 
model given by the linear stochastic differential equation: 
\begin{equation}\label{model}
X_t = X_0 + \int_0^t \Theta^T \phi(X_s) ds + \sigma W_t, 
\end{equation}
where $\{X_t\}_{t \geq 0} = \{(X_t^1,\ldots,X_t^p)\}_{t \geq 0}$ is a $p$-dimensional process, 
$\{W_t\}_{t \geq 0} := \{(W_t^1,\ldots,W_t^p)\}_{t \geq 0}$ be a $p$-dimensional standard Brownian motion,
$\Theta$ is a $p\times p$ sparse deterministic matrix, and 
$\sigma = \diag(\sigma_1,\ldots,\sigma_p)$ is a $p \times p$ diagonal matrix and 
$\phi(x) = (\phi_1(x_1), \ldots, \phi_p(x_p))$ for $x = (x_1, \ldots, x_p) \in \mathbb{R}^p$ is a smooth 
$\mathbb{R}^p$-valued function. 
We will propose some estimators for the drift matrix $\Theta$ and 
the diffusion matrix $\sigma$ based on the
observation of $\{X_t\}_{t \geq 0}$ at $n+1$ equidistant time points 
$0 =:t_0^n < t_1^n < \ldots < t_n^n$, under the high-dimensional and sparse setting, $i.e.$, 
$p \gg n$ and the number of nonzero components of the true value $\Theta^0$
is relatively small. 

To deal with such high-dimensional and sparse parameters, various kinds of estimators for 
regression models have been discussed. 
One of the most famous estimation methods is the $l_1$-penalized method called 
Lasso proposed by 
\cite{key tibshirani96}, which has been 
studied for regression models with high-dimensional and sparse parameters 
including the models of stochastic processes. 
On the other hand, the relatively new estimation procedure called the Dantzig selector 
was proposed for linear regression models in \cite{key tao}.
The Dantzig selector has some properties similar to Lasso estimator for linear regression models in some theoretical senses \cite{key bickel-ritov-tsybakov}.
Moreover, it is well known that the Dantzig selector for linear models 
has computational advantages since it can be solved by a linear programming, while Lasso demands
a convex program.

The estimation problems for models of diffusion processes 
based on discretely observed data have been studied by many researchers in 
low dimensional settings.
Especially, the quasi-likelihood methods have been used to estimate the unknown parameter, for instance, 
see 
\cite{yoshida92}, 
\cite{key jacod}, and 
\cite{key kessler97}. 
In addition, the penalized estimators for discretely observed multi-dimensional models of diffusion processes were discussed by \cite{iacus} and \cite{key masuda-shimizu} in low-dimensional settings.

Pioneering work of high-dimensional linear diffusion processes was done by 
\cite{periera-ibrahimi}.
They studied the various models of multi-dimensional diffusion processes observed continuously in high-dimensional settings including the 
following $p$-dimensional linear models:
\begin{equation}\label{periera}
X_t = X_0 + \int_0^t \Theta^T X_s ds + W_t.
\end{equation}
These models may be useful for various fields such as statistical physics, chemical reactions and network systems. 
They proposed the Lasso type estimator of the drift matrix $\Theta$ and prove the variable selection consistency of the estimator. 
In this paper, we will apply the Dantzig selector for the 
linear models of stochastic processes (\ref{model}), which is similar to the model (\ref{periera}), to estimate the drift matrix $\Theta$ 
and prove the consistency in the sense of $l_q$ norm for every $q \in [1,\infty]$ and the
variable selection consistency under some appropriate conditions.
Moreover, using the variable selection consistency, we will construct the new estimator 
which has an asymptotic normality.

This paper is organized as follows. 
In Section 2, we will introduce our model setups and some regularity conditions. 
The construction and the consistency of the estimator of diffusion matrix are described in Section 3.
The estimation procedure for the drift matrix and the $l_q$ consistency of the estimator are presented in Section 4. 
Then, we will prove the variable selection consistency in Section 5. 
Moreover, we will construct the new estimator by using the variable selection consistency 
of the drift estimator and prove the asymptotic normality of the new estimator in this section. Finally, some concluding remarks and future works are described in Section 6.

Throughout this paper, we denote by $\|\cdot\|_q$ the $l_q$ norm of vector for every $q \in[1,\infty]$, $i.e.$ for $v = (v_1,v_2,\ldots,v_p)^T \in \mathbb{R}^p$, we define that 
\begin{align*}
\|v\|_q &= \left(\sum \limits_{j=1}^p |v_j|^q \right)^{\frac{1}{q}},\quad q < \infty ; \\
\|v\|_{\infty} &= \sup \limits_{1 \leq j \leq p} |v_j|.
\end{align*}
In addition, for a $m \times n$ matrix $A$, where $m,\ n \in \mathbb{N}$, we define $\|A\|_{\infty}$ by
\[\|A\|_{\infty} := \sup \limits_{1 \leq i \leq m} \sup \limits_{1 \leq j \leq n} |A_i^j|,\]
where $A_i^j$ denotes the $(i,j)$-component of the matrix $A$.
For a vector $v \in \mathbb{R}^p$, and an index set $T\subset \{1,2,\ldots,p\}$, 
we write $v_T$ for the $|T|$-dimensional sub-vector of $v$ restricted by the index set $T$, 
where $|T|$ is the number of elements in the set $T$.
Similarly, for a $p \times p$ matrix $A$ and index sets $T,T' \subset \{1,2,\ldots,p\}$, 
we define the $|T| \times |T'| $ sub-matrix $A_{T,T'}$ by 
\[
A_{T,T'} := (A_{i,j})_{i \in T, j \in T'}.
\]
For a $\mathbb{R}$-valued random variable $X$ on a probability space $(\Omega, \mathcal{F}, P)$, we define the $L_q$ norm of $X$ by 
\[
\|X\|_{L^q} := (E[|X|^q])^{\frac{1}{q}},
\]
where $E[\cdot]$ is the expectation with respect to the probability measure $P$.
\section{Preliminaries}
Let $\{W_t^1\}_{t \geq 0},\{W_t^2\}_{t \geq0},\ldots$ be independent
standard Brownian motions on a probability space $(\Omega,\mathcal{F},P)$.
Define the filtration $\{\mathcal{F}_t\}_{t \geq 0}$ as follows.
\[\mathcal{F}_t := \mathcal{F}_0 \lor \sigma(W_s^j,\ j = 1,2,\ldots : s \in [0,t]),\quad t \geq0,\]
where $\mathcal{F}_0$ is a $\sigma$-field independent of $\{W_t\}_{t\geq0}$.
We consider the following $p$-dimensional linear
stochastic differential equation (\ref{model}) defined on the stochastic basis
$(\Omega,\mathcal{F},\{\mathcal{F}_t\}_{t\geq0},P)$:
\begin{equation*}
X_t = X_0 + \int_0^t \Theta^T \phi(X_s) ds + \sigma W_t, 
\end{equation*}
where $\{X_t\}_{t \geq 0} = \{(X_t^1,\ldots,X_t^p)\}_{t \geq 0}$ is a $p$-dimensional process, 
$\{W_t\}_{t \geq 0} := \{(W_t^1,\ldots,W_t^p)\}_{t \geq 0}$ be a $p$-dimensional standard Brownian motion,
$\Theta$ is a $p\times p$ deterministic matrix, and 
$\sigma = \diag(\sigma_1,\ldots,\sigma_p)$ is a $p \times p$ diagonal matrix and 
$\phi(x) = (\phi_1(x_1), \ldots, \phi_p(x_p)), x = (x_1, \ldots, x_p)$ is a smooth 
$\mathbb{R}^p$-valued function. 
Assume that $X_0$ is $\mathcal{F}_0$-measurable.
Note that $\{X_t^i\}_{t \geq0}$ for each $i = 1,2,\ldots,p$ satisfies the following equation.
\[X_t^i = X_0^i + \int_0^t \Theta_i^T \phi(X_s) ds + \sigma_i W_t^i,\]
where $\Theta_i$ is an $i$-th row of matrix $\Theta$. 
In this paper, we consider the estimation problem of $\Theta$ and $\sigma$. 
We observe the process $\{X_t\}_{t \geq 0}$ at $n+1$ discrete time points: 
\[0 =:t_0^n < t_1^n < \ldots < t_n^n,\quad t_k^n = \frac{k t_n^n}{n},\quad k= 0,1,\ldots,n.\]
Write $T_0^i$ for the support of $\Theta_i^0$ for every $i \in \{1,2,\ldots,p\}$, $i.e.$,
$T_0^i = \{j : \Theta_{ij}^0 \not = 0\}$. Let $S_i$ be the number of elements in the index set $T_0^i$.
Hereafter, we assume the following high-dimensional and sparse setting for the true value $\Theta^0$.
\[p=p_n \gg n,\quad  \sup_{1 \leq i < \infty} S_i =: S^* \ll n,\]
where $S^*>0$ is a constant which does not depend on $n$.
We introduce the $\log$-quasi-likelihood given by 
\[l_n(\Theta_i, \sigma_i) := \frac{1}{n \Delta_n} \sum_{k=1}^n \left\{-\frac{1}{2}\log(2\pi \sigma^2 \Delta_n)
-\frac{|X_{t_k^n}^i-X_{t_{k-1}^n}-\Theta_i^T X_{t_{k-1}^n}\Delta_n|^2}{2\sigma_i^2 \Delta_n}\right\},\]
where $\Delta_n := t_k^n - t_{k-1}^n = t_n^n/n$. 
We assume the following conditions.
\begin{assumption}\label{regularity}
\begin{description}
\item[$(i)$]
Suppose that $p_n \rightarrow \infty$ as $n \rightarrow \infty$, $\log p_n = o(\sqrt{n \Delta_n})$, and $\Delta_n \asymp n^{-\alpha}$, for some $\alpha \in(1/2,1)$. 
Especially, the last condition implies that $n \Delta_n = T_n \rightarrow \infty$, 
$\Delta_n \rightarrow 0$ and $n\Delta_n^2 \rightarrow 0$ as $n \rightarrow \infty$. 
\item[$(ii)$]
The functions $\phi_i$'s are uniformly bounded and 
satisfy global Lipschitz condition, $i.e.$, there exist positive constants $L$ and $L'$ 
which satisfy the following conditions for every 
$x,y \in \mathbb{R}$.
\[
\sup_{1 \leq i < \infty} \sup_{x \in \mathbb{R}} |\phi_i(x)| \leq L
\]
\[
\sup_{1 \leq i < \infty} |\phi_i(x) - \phi_i(y)| \leq L' |x-y|.
\]
\item[$(iii)$]
For every $\nu \geq 1$, there exists $\tilde{C}_\nu$ such that 
\[\sup_{1 \leq i < \infty}\sup_{t \in [0,\infty)} E\left[ |X_t^i|^\nu \right] \leq \tilde{C}_\nu.\]
Note that this assumption implies that
\[\sup_{t \in [0,\infty)} E\left[\sup_{1 \leq i \leq p_n} |X_t^i|^\nu \right] \leq p_n \tilde{C}_\nu.\]
\item[$(iv)$]
There exist positive constants $K_1,\ K_2,\ K_3$ and $K_4$ for the true values $\Theta^0$, $\sigma^0$
such that 
\[
K_2 < \inf_{1 \leq i <\infty,\ j \in T_0^i} |\Theta_{ij}^0| \leq \sup_{1\leq i,j <\infty} |\Theta_{ij}^0| < K_1,
\]
\[K_4 < \inf_{1 \leq i <\infty} |\sigma_i^0| \leq
\sup_{1 \leq i <\infty} |\sigma_i^0| < K_3.
\]
\item[$(v)$]
The $\mathbb{R}^{S_i}$-valued process $\{X_{tT_0^i}\}_{t \in [0,T_n]}$ is ergodic for $\Theta = \Theta^0$, 
$\sigma = \sigma^0$ and every $i \in \mathbb{N}$ with invariant measure $\mu_0^i$.
\end{description}
\end{assumption}
Assumption $(iii)$ is satisfied if $X_0$ is a Gaussian random variable. 
In particular, for the process $\{X_t\}_{t \geq 0}$ which satisfies (\ref{model}), 
Assumption $(iii)$ implies that  
for every $\nu \geq 1$, there exists a constant $C_\nu>0$ such that 
for all $n$, $i = 1,2,\ldots,p_n$ and $k = 1,2,\ldots,n$, 
\[E\left[\sup_{s \in [t_{k-1}^n,t_k^n]} |X_s^i - X_{t_{k-1}^n}^i|^\nu\right] \leq C_\nu \Delta_n^{\frac{\nu}{2}}.\]
\section{Estimators for diffusion coefficients}
It is well known that we can ignore the influence of $\Theta$ when we estimate the diffusion coefficients $\sigma$.
So we take $\Theta = 0$ and define the estimator of $\sigma_i$ by the solution $\hat{\sigma}_{n,i}$ to 
the following equation : 
\[\frac{\partial}{\partial \sigma_i} l_n(0,\sigma_i) = 0.\]
Note that $\hat{\sigma}_{n,i}$ can be written explicitly by the following form: 
\[\hat{\sigma}_i^2 :=\hat{\sigma}_{n,i}^2 =  \frac{1}{n \Delta_n} \sum_{k=1}^n |X^i_{t_k^n}-X^i_{t_{k-1}^n}|^2.\]
The next theorem states the consistency of $\hat{\sigma}_i$ uniformly in $i$.
\begin{thm}\label{diffusion}
Under Assumption \ref{regularity}, it holds that 
\[\sup_{1 \leq i \leq p_n} |\hat{\sigma}_i^2 -(\sigma_i^0)^2| \rightarrow^p 0, \quad n\rightarrow \infty.\]
\end{thm}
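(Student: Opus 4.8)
The plan is to exploit that $\hat{\sigma}_i^2$ is a normalized realized quadratic variation, whose limit is governed entirely by the Brownian part while the drift contributes only lower-order terms. Writing $\Delta_k W^i := W_{t_k^n}^i - W_{t_{k-1}^n}^i$ and $b_s^i := (\Theta_i^0)^T \phi(X_s)$, the true dynamics give the increment decomposition $X_{t_k^n}^i - X_{t_{k-1}^n}^i = \int_{t_{k-1}^n}^{t_k^n} b_s^i\,ds + \sigma_i^0 \Delta_k W^i$. Squaring and summing, I would split
\[
\hat{\sigma}_i^2 = A_i + C_i + M_i,
\]
where $A_i := \frac{1}{n\Delta_n}\sum_{k=1}^n \bigl(\int_{t_{k-1}^n}^{t_k^n} b_s^i\,ds\bigr)^2$ collects the drift-squared terms, $C_i := \frac{2\sigma_i^0}{n\Delta_n}\sum_{k=1}^n \Delta_k W^i \int_{t_{k-1}^n}^{t_k^n} b_s^i\,ds$ the cross terms, and $M_i := \frac{(\sigma_i^0)^2}{n\Delta_n}\sum_{k=1}^n (\Delta_k W^i)^2$ the Brownian term.

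First I would dispose of the drift term deterministically. By Assumption \ref{regularity}$(ii)$ the $\phi_j$ are bounded by $L$, and by Assumption \ref{regularity}$(iv)$ together with the sparsity $|T_0^i| = S_i \leq S^*$, each row satisfies $|b_s^i| = |\sum_{j \in T_0^i} \Theta_{ij}^0 \phi_j(X_s^j)| \leq S^* K_1 L =: B$, so that $|\int_{t_{k-1}^n}^{t_k^n} b_s^i\,ds| \leq B\Delta_n$ and hence $0 \leq A_i \leq B^2 \Delta_n$ uniformly in $i$, which tends to $0$ by Assumption \ref{regularity}$(i)$. The cross term then requires no separate probabilistic argument: Cauchy--Schwarz applied to the sum over $k$ yields $|C_i| \leq 2\sqrt{A_i}\sqrt{M_i}$, so once $A_i \to 0$ uniformly and $M_i$ is shown to be uniformly bounded on a high-probability event (which follows from its convergence below, using $(\sigma_i^0)^2 < K_3^2$), $C_i \to 0$ uniformly as well.

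The heart of the matter is the uniform convergence $\sup_i |M_i - (\sigma_i^0)^2| \rightarrow^p 0$. Since $\Delta_k W^i \sim N(0,\Delta_n)$ independently in $k$, the variables $\zeta_k^i := (\Delta_k W^i)^2/\Delta_n$ are i.i.d.\ $\chi_1^2$, and $M_i - (\sigma_i^0)^2 = (\sigma_i^0)^2 \, n^{-1}\sum_{k=1}^n (\zeta_k^i - 1)$. Each $\zeta_k^i - 1$ is centered and sub-exponential, so Bernstein's inequality gives, for small $\epsilon$, $P(|n^{-1}\sum_{k=1}^n(\zeta_k^i - 1)| > \epsilon) \leq 2\exp(-c n \epsilon^2)$ with an absolute constant $c$; combined with $(\sigma_i^0)^2 < K_3^2$ this controls $|M_i - (\sigma_i^0)^2|$ for each fixed $i$. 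A union bound over $i \in \{1,\ldots,p_n\}$ then yields
\[
P\Bigl(\sup_{1 \leq i \leq p_n} |M_i - (\sigma_i^0)^2| > K_3^2 \epsilon\Bigr) \leq 2\exp\bigl(\log p_n - c n \epsilon^2\bigr).
\]

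The final step is to check that this bound vanishes: because $\Delta_n \to 0$ we have $\sqrt{n\Delta_n} = o(n)$, so Assumption \ref{regularity}$(i)$'s requirement $\log p_n = o(\sqrt{n\Delta_n})$ forces $\log p_n = o(n)$, whence $\log p_n - c n \epsilon^2 \to -\infty$. Collecting the three pieces via $\sup_i |\hat{\sigma}_i^2 - (\sigma_i^0)^2| \leq \sup_i A_i + \sup_i |C_i| + \sup_i |M_i - (\sigma_i^0)^2|$ gives the claim. I expect the main obstacle to be precisely this uniformity over the diverging index set $p_n$: a naive second-moment bound on $M_i$ would only permit polynomial growth of $p_n$, so the exponential sub-exponential tail is essential to accommodate $\log p_n = o(\sqrt{n\Delta_n})$, and keeping the drift bound $B$ independent of $i$ --- which is exactly where the sparsity $S_i \leq S^*$ enters --- is what makes all the estimates uniform.
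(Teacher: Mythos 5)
Your proof is correct, and although it starts from the same three-term decomposition of $\hat{\sigma}_i^2-(\sigma_i^0)^2$ (drift-squared term, cross term, Brownian quadratic-variation term) as the paper, your treatment of each term is genuinely different. For the drift-squared term you give a purely deterministic bound $A_i \leq (S^*K_1L)^2\Delta_n$ from boundedness of $\phi$, sparsity, and Assumption~\ref{regularity}$(iv)$, where the paper runs a Markov-inequality argument; your version is cleaner and loses nothing. For the cross term, your pathwise Cauchy--Schwarz reduction $|C_i|\leq 2\sqrt{A_i}\sqrt{M_i}$ eliminates any separate probabilistic work, whereas the paper uses Markov's inequality, Schwarz's inequality, and a $\Phi_2$-Orlicz-norm maximal inequality over the $p_n$ Brownian increments. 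For the main term, you exploit that the variables $(\Delta_k W^i)^2/\Delta_n$ are i.i.d.\ $\chi_1^2$, hence sub-exponential, apply scalar Bernstein's inequality, and finish with a union bound over $i\leq p_n$, which needs only $\log p_n = o(n)$; the paper instead applies Bernstein's inequality for martingales to $M_n^i=\sum_k\{(\Delta_k W^i)^2-\Delta_n\}$, converts the resulting exponential tail into a bound on $\|\sup_i|M_n^i|\|_{\Phi_1}$ via Lemma~2.2.10 of van der Vaart and Wellner, and chooses $\epsilon=\log(1+p_n)$, which uses $\log p_n = o(n\Delta_n)$. Both routes are valid under Assumption~\ref{regularity}, and yours is more elementary and in fact demands less of the dimension growth (only $\log p_n=o(n)$ rather than $o(n\Delta_n)$). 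What the paper's heavier machinery buys is reusability: the martingale-Bernstein-plus-Orlicz-maximal-inequality scheme is exactly what is applied again in Lemma~\ref{drift3}, where the summands $\phi_j(X_{t_{k-1}^n}^j)(W_{t_k^n}^i-W_{t_{k-1}^n}^i)$ are martingale differences involving the process $X$ and are no longer i.i.d., so your chi-squared shortcut would not transfer there.
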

\begin{proof}
It is clear that
\begin{eqnarray*}
\hat{\sigma}_i^2 
&=& \frac{1}{n \Delta_n} \sum_{k=1}^n 
\left|
\int_{t_{k-1}^n}^{t_k^n} (\Theta_i^0)^T \phi(X_s) ds + \sigma_i^0 (W_{t_k^n}- W_{t_{k-1}^n}) 
\right|^2 \\
&=& \frac{1}{n \Delta_n} \sum_{k=1}^n \left| \int_{t_{k-1}^n}^{t_k^n} (\Theta_i^0)^T \phi(X_s) ds \right|^2
+ \frac{2 \sigma_i^0}{n \Delta_n} \sum_{k=1}^n 
\left(
\int_{t_{k-1}^n}^{t_k^n} (\Theta_i^0)^T \phi(X_s) ds 
\right)
\left(
W_{t_k^n}-W_{t_{k-1}^n}
\right)\\
&& 
+ \frac{(\sigma_i^0)^2}{n \Delta_n} \sum_{k=1}^n (W_{t_k^n} - W_{t_{k-1}^n})^2.
\end{eqnarray*}
Thus we have that
\[\hat{\sigma}_i^2 - (\sigma_i^0)^2
= (I) + (II) + (III),\]
where
\[
(I)
= \frac{1}{n \Delta_n} \sum_{k=1}^n \left| \int_{t_{k-1}^n}^{t_k^n} (\Theta_i^0)^T \phi(X_s) ds \right|^2,
\]
\[
(II)
= \frac{2 \sigma_i^0}{n \Delta_n} \sum_{k=1}^n 
\left(
\int_{t_{k-1}^n}^{t_k^n} (\Theta_i^0)^T \phi(X_s) ds 
\right)
\left(
W_{t_k^n}-W_{t_{k-1}^n}
\right)
\]
and
\[
(III)
=
\frac{(\sigma_i^0)^2}{n\Delta_n} \sum_{k=1}^n \{(W_{t_k^n}^i - W_{t_{k-1}^n}^i)^2 - \Delta_n\}.
\]
Using Markov's inequalty and Schwartz's inequality, we can evaluate $(I)$ for every $\delta>0$
uniformly in $i$ as follows
\begin{eqnarray*}
\lefteqn{
P \left(
\sup_{1 \leq i \leq p_n} \frac{1}{n \Delta_n} \sum_{k=1}^n 
\left|
\int_{t_{k-1}^n}^{t_k^n} (\Theta_i^0)^T \phi(X_s) ds
\right|^2
\geq \delta
\right)}\\
&\leq&
\frac{1}{n \Delta_n \delta} \sum_{k=1}^n E\left[
\sup_{1 \leq i \leq p_n}
\left|
\int_{t_{k-1}^n}^{t_k^n} (\Theta_i^0)^T \phi(X_s) ds
\right|^2
\right] \\
&\leq&
\frac{1}{n \Delta_n \delta} \sum_{k=1}^n E \left[
\sup_{1 \leq i \leq p_n}
\Delta_n \int_{t_{k-1}^n}^{t_k^n} \left| (\Theta_i^0)^T \phi(X_s) \right|^2 ds
\right]\\
&\leq&
\frac{1}{n \delta} \sum_{k=1}^n \int_{t_{k-1}^n}^{t_k^n} E \left[
\sup_{1 \leq i \leq p_n} \|\Theta_i^0\|_1^2 \max_{l \in T_0^i} |\phi_l (X_s^l)|^2
\right]ds \\
&\leq&
\frac{1}{\delta} \sup_{1 \leq i \leq p_n} \|\Theta_i^0\|_1^2 S^* \tilde{C}_2 \Delta_n^2.
\end{eqnarray*}
The right-hand side of this inequality converges to $0$ if we put $\delta = \Delta_n$. 
This yields that $(I) \rightarrow^p 0$ uniformly in $i$. 

Using Markov's inequality, Schwartz's inequality and Orlitz norm $\|\cdot\|_{\Phi_2}$ with respect to the function 
$\Phi_2(x) = e^{x^2}-1$, we can evaluate $(II)$ for every $\delta > 0$ uniformly in $i$ as follows
\begin{eqnarray*}
\lefteqn{
P \left(
\sup_{1 \leq i \leq p_n} \frac{2 \sigma_i^0}{n \Delta_n} 
\sum_{k=1}^n \left|
\left(
\int_{t_{k-1}^n}^{t_k^n} (\Theta_i^0)^T \phi(X_s) ds
\right)
\left(
W_{t_k^n}^i - W_{t_{k-1^n}}^i
\right)
\right|
\geq \delta
\right)}\\
&\leq&
\frac{2\sup_i \sigma_i^0}{n \Delta_n \delta} \sum_{k=1}^n 
E \left[
\sup_{1 \leq i \leq p_n} \left|
\left(
\int_{t_{k-1}^n} (\Theta_i^0)^T \phi(X_s) ds
\right)
\left(
W_{t_k^n}^i - W_{t_{k-1}^n}^i
\right)
\right|
\right]\\
&\leq&
\frac{2\sup_i \sigma_i^0}{n \Delta_n \delta} \sum_{k=1}^n 
\left(
E \left[
\sup_{1 \leq i \leq p_n} \left|
\int_{t_{k-1}^n} (\Theta_i^0)^T \phi(X_s) ds
\right|^2
\right]
\right)^{\frac{1}{2}}
\left\|
\sup_{1 \leq i \leq p_n}
W_{t_k^n}^i - W_{t_{k-1}^n}^i
\right\|_{L^2} \\
&\leq&
\frac{2\sup_i \sigma_i^0}{n \Delta_n \delta} \sum_{k=1}^n 
(S^* \tilde{C}_2 \Delta_n^2)^{\frac{1}{2}}
K \log(1+p_n) \sup_{1 \leq i \leq p_n} 
\left\|
W_{t_k^n}^i - W_{t_{k-1}^n}^i
\right\|_{\Phi_2} \\
&\leq&
\frac{2\sup_i \sigma_i^0}{n \Delta_n \delta} \sum_{k=1}^n 
(S^* \tilde{C}_2 \Delta_n^2)^{\frac{1}{2}}
K \log(1+p_n) \left(
\frac{8 \Delta_n}{3}
\right)^{\frac{1}{2}},
\end{eqnarray*}
where $K$ is a positive constant which does not depend on $n$. 
If we put $\delta = \Delta_n^{1/3}$, the right-hand side of this inequality converges to $0$. 
So we have that $(II) \rightarrow^p 0$ uniformly in $i$.

$(III)$ is a terminal value of $\{\mathcal{F}_{t_k^n}\}_{k\geq0}$-martingale. 
We apply Bernstein's inequality for martingales (See \cite{key geer}, Lemma 8.9.) to the following 
process: 
\[M_n^i := \sum_{k=1}^n \{(W_{t_k^n}^i-W_{t_{k-1}^n}^i)^2 -\Delta_n\}.\]
To do this, we shall evaluate the next moment for every integer $m \geq 2$,  
\[\frac{1}{n} \sum_{k=1}^n E[|(W_{t_k^n}^i-W_{t_{k-1}^n}^i)^2 - \Delta_n|^m | \mathcal{F}_{t_{k-1}^n}].\]
Noting that $W_{t_k^n}^i-W_{t_{k-1}^n}^i$ is independent of $\mathcal{F}_{t_{k-1}^n}$, 
we have that  
\begin{eqnarray*}
\lefteqn{
\frac{1}{n} \sum_{k=1}^n E[|(W_{t_k^n}^i-W_{t_{k-1}^n}^i)^2 - \Delta_n|^m | \mathcal{F}_{t_{k-1}^n}]} \\
&=& \frac{1}{n} \sum_{k=1}^n E[|(W_{t_k^n}^i-W_{t_{k-1}^n}^i)^2 - \Delta_n|^m] \\
&\leq& \frac{1}{n} \sum_{k=1}^n \sum_{r=0}^m \binom{m}{r} \Delta_n^{m-r} E[(W_{t_k^n}^i-W_{t_{k-1}^n}^i)^{2r}] \\
&=& \Delta_n^m + \sum_{r=1}^m \binom{m}{r} \Delta_n^{m-r} (2r-1)!! \Delta_n^r \\
&=& \Delta_n^m +  \sum_{r=1}^m \frac{(2r-1)!!}{r! (m-r)!} m! \Delta_n^m \\
&<& \sum_{r=0}^m 2^r m! \Delta_n^m \\
&<& \frac{m!}{2} (2 \Delta_n)^{m-2} 4 \Delta_n^2.
\end{eqnarray*}
So it follows from Bernstein's inequality that for every $\epsilon >0$, 
\[P(|M_n^i| \geq \epsilon) \leq 2 \exp \left(-\frac{\epsilon^2}{2(2\Delta_n \epsilon + 4n \Delta_n^2)}\right).\]
We write $\|\cdot\|_{\Phi_1}$ for the Orlicz norm with respect to $\Phi_1(x) :=e^x-1$.
Using Lemma 2.2.10 from 
\cite{key van der vaart} to deduce that there exist a constant
$L_1 >0$ depending only on $\Phi_1$ such that  
\[\left\|\sup_{1 \leq i \leq p_n} |M_n^i| \right\|_{\Phi_1} 
\leq L_1 \{2 \Delta_n \log(1+p_n) + \sqrt{4n\Delta_n^2 \log(1+p_n)}\}.\]
So we obtain from Markov's inequality that 
\[P\left(\sup_{1 \leq i \leq p_n} |M_n^i| \geq \epsilon\right)
\leq \Phi_1\left(\frac{\epsilon}{L_1 \{2 \Delta_n \log(1+p_n) + \sqrt{4n\Delta_n^2 \log(1+p_n)}\}}\right)^{-1}.\]
If we put $\epsilon = \log (1+p_n)$, then the right-hand side of above inequality converges to $0$. 
Note that 
\begin{eqnarray*}
P\left(\sup_{1 \leq i \leq p_n} 
\left|\frac{(\sigma_i^0)^2}{n\Delta_n} \sum_{k=1}^n \{(W_{t_k^n}^i - W_{t_{k-1}^n}^i)^2 - \Delta_n\}\right| 
\geq \frac{(\sigma_i^0)^2\epsilon}{n\Delta_n}\right)
= P\left(\sup_{1 \leq i \leq p_n} |M_n^i| \geq \epsilon\right).
\end{eqnarray*}
If we take $\epsilon = \log(1+ p_n)$, it holds under 
Assumption \ref{regularity} that 
$(\sigma_i^0)^2 \epsilon/n\Delta_n \rightarrow 0$, which yields the conclusion.
\qed
\end{proof}
Note that Theorem \ref{diffusion} and Assumption \ref{regularity} imply that there exists a constant
$\tilde{K}_1$ such that 
\[\lim_{n \rightarrow \infty}P\left(\sup_{1 \leq i \leq p_n} \hat{\sigma}_i^{-2} \geq \tilde{K}_1\right) = 0.\]
\section{Estimators for drift coefficients}
In this section, we define the estimator of $\Theta_i$ by plugging $\hat{\sigma}_i$ in 
quasi-$\log$-likelihood $l_n$.
Hereafter, we write $\psi_n(\Theta_i)$ for the gradient of $l_n(\Theta_i,\hat{\sigma}_i)$ with respect to $\Theta_i$, and $V_n^i$ for Hessian of $-l_n(\Theta_i,\hat{\sigma}_i)$,
$i.e.$,
\[\psi_n(\Theta_i) := \frac{1}{n \Delta_n \hat{\sigma}_i^2} \sum_{k=1}^n 
\phi(X_{t_{k-1}^n}) (X_{t_k^n}^i - X_{t_{k-1}^n}^i - \Theta_i^T \phi(X_{t_{k-1}^n}) \Delta_n)\]
\[V_n^i := \frac{1}{n \hat{\sigma}_i^2} \sum_{k=1}^n \phi(X_{t_{k-1}^n}) \phi(X_{t_{k-1}^n})^T.\]
Note that Hessian matrix does not depend on $\Theta$.
Define the Dantzig selector type estimator $\hat{\Theta}_{n,i}$ of $\Theta_i$ by 
\[\hat{\Theta}_{n,i} := \hat{\Theta}_i := \argmin_{\Theta_i \in \mathcal{C}_n^i} \|\Theta_i\|_1, \quad 
\mathcal{C}_n^i := \{\Theta_i \in \mathbb{R}^{p_n} : \|\psi_n(\Theta_i)\|_\infty \leq \gamma_n\},\]
where $\gamma_n$ is a tuning parameter. 
The goal of this section is to prove the consistency of $\hat{\Theta}_i$.
\subsection{Some discussions on the gradient}
Here, we'll prove that
\[\sup_{1 \leq i \leq p_n}\|\psi_n(\Theta_i^0)\|_\infty \leq \gamma_n\] 
with large probability. 
To do this, we decompose that 
\[\psi_n^j(\Theta_i^0) = A_n^{i,j} + B_n^{i,j},\]
where 
\[A_n^{i,j} := \frac{1}{n \Delta_n \hat{\sigma}_i^2} 
\sum_{k=1}^n \phi_j(X_{t_{k-1}^n}^j) \int_{t_{k-1}^n}^{t_k^n} (\Theta_i^0)^T(\phi(X_s)-\phi(X_{t_{k-1}^n})) ds, 
\]
and
\[B_n^{i,j} := \frac{\sigma_i^0}{n \Delta_n \hat{\sigma}_i^2} 
\sum_{k=1}^n \phi_j(X_{t_{k-1}^n}^j)(W_{t_k^n}^i - W_{t_{k-1}^n}^i).\]
Hereafter, we assume that $\gamma_n =  O( \log(1 + p_n^2)/\sqrt{n \Delta_n})$.
\begin{lem}\label{drift1}
Under Assumption \ref{regularity}, it holds that 
\[\lim_{n \rightarrow \infty}P\left(\sup_{1 \leq i,j \leq p_n} |A_n^{i,j}| \geq \gamma_n\  {\rm and}\ 
 \sup_{1 \leq i \leq p_n}\hat{\sigma}_i^{-2} \leq \tilde{K}_1 \right) = 0.\]
\end{lem}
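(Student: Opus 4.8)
The plan is to work on the event $G_n:=\{\sup_{1\le i\le p_n}\hat\sigma_i^{-2}\le\tilde K_1\}$, bound $\sup_{i,j}|A_n^{i,j}|$ there by a quantity that no longer involves $\hat\sigma_i$, and then apply Markov's inequality together with a maximal inequality over the $p_n$ coordinates. The intuition is that $A_n^{i,j}$ only measures the error made by freezing $\phi(X_s)$ at the left endpoint $X_{t_{k-1}^n}$ inside the drift integral, so it should be of smaller order than the genuinely stochastic term $B_n^{i,j}$. The Lipschitz property of $\phi$ converts this freezing error into the increments $U_{k,l}:=\int_{t_{k-1}^n}^{t_k^n}|X_s^l-X_{t_{k-1}^n}^l|\,ds$, each of mean $O(\Delta_n^{3/2})$ for fixed $l$, and the whole task is to keep the dependence on $p_n$ logarithmic when taking suprema.

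First I would bound, on $G_n$, using $|\phi_j|\le L$ (Assumption \ref{regularity} $(ii)$), the Lipschitz estimate $|(\Theta_i^0)^T(\phi(X_s)-\phi(X_{t_{k-1}^n}))|\le L'\sum_{l\in T_0^i}|\Theta_{il}^0|\,|X_s^l-X_{t_{k-1}^n}^l|$, and $\|\Theta_i^0\|_1\le S^* K_1$ (Assumption $(iv)$ with $S_i\le S^*$), together with $U_{k,l}\le\Delta_n\sup_s|X_s^l-X_{t_{k-1}^n}^l|$, to obtain
\[
\sup_{1\le i,j\le p_n}|A_n^{i,j}|
\le \frac{\tilde K_1 L L' S^* K_1}{n}\sum_{k=1}^n \sup_{1\le l\le p_n}\sup_{s\in[t_{k-1}^n,t_k^n]}|X_s^l-X_{t_{k-1}^n}^l|
=:R_n .
\]
Since $R_n$ no longer contains $\hat\sigma_i$, the probability in the statement is at most $P(R_n\ge\gamma_n)$, and it suffices to show $E[R_n]/\gamma_n\to0$.

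The key step is to control $E[\sup_{1\le l\le p_n}\sup_s|X_s^l-X_{t_{k-1}^n}^l|]$ with only a logarithmic dependence on $p_n$. I would split the increment into its drift part $\int_{t_{k-1}^n}^{s}(\Theta_l^0)^T\phi(X_u)\,du$, bounded deterministically by $\Delta_n S^* K_1 L$, and its martingale part $\sigma_l^0(W_s^l-W_{t_{k-1}^n}^l)$, whose supremum over $s$ is sub-Gaussian with $\|\cdot\|_{\Phi_2}=O(\sqrt{\Delta_n})$ uniformly in $l$ by Assumption $(iv)$ and the reflection principle. Hence $\sup_s|X_s^l-X_{t_{k-1}^n}^l|$ has $\Phi_2$-norm $O(\sqrt{\Delta_n})$ uniformly in $l$, and Lemma 2.2.10 of \cite{key van der vaart} gives
\[
\left\|\sup_{1\le l\le p_n}\sup_s|X_s^l-X_{t_{k-1}^n}^l|\right\|_{\Phi_2}
\le K\sqrt{\log(1+p_n)}\,O(\sqrt{\Delta_n}),
\]
so that $E[R_n]\le C\sqrt{\log(1+p_n)\,\Delta_n}$. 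Plugging in $\gamma_n\asymp\log(1+p_n^2)/\sqrt{n\Delta_n}$ yields
\[
\frac{E[R_n]}{\gamma_n}
\asymp \frac{\sqrt{\log(1+p_n)}\,\Delta_n\sqrt n}{\log(1+p_n^2)},
\]
which tends to $0$ because $\Delta_n\sqrt n=\sqrt{n\Delta_n^2}\to0$ and $\sqrt{\log(1+p_n)}/\log(1+p_n^2)\asymp 1/\sqrt{\log p_n}\to0$ under Assumption \ref{regularity} $(i)$.

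The main obstacle I anticipate is precisely this maximal-inequality step. The crude moment bound $E[\sup_l Y_l]\le p_n^{1/\nu}\max_l\|Y_l\|_{L^\nu}$ pays a polynomial factor $p_n^{1/\nu}$ that overwhelms the logarithmic $\gamma_n$ for any fixed $\nu$, so it is essential to use the genuine sub-Gaussianity of the Brownian increments, rather than only the polynomial moment bound recorded after Assumption \ref{regularity}, in order to pay merely $\sqrt{\log(1+p_n)}$ for the supremum over the $p_n$ coordinates. Establishing the uniform $\Phi_2$-norm bound for the increments and tracking the constants in Lemma 2.2.10 is where the care lies; the remaining estimates are routine, and the argument mirrors the Orlicz-norm technique already used for term $(II)$ in the proof of Theorem \ref{diffusion}.
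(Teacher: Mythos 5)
Your proposal is correct, and its skeleton—restricting to the event $\{\sup_{1\le i\le p_n}\hat{\sigma}_i^{-2}\le\tilde{K}_1\}$, applying Markov's inequality, and using $|\phi_j|\le L$, the Lipschitz property of $\phi$, and $\|\Theta_i^0\|_1\le S^*K_1$—is the same as the paper's; the genuine difference is how the supremum over coordinates of the process increment is handled. The paper keeps the supremum over $l\in T_0^i$ inside the expectation and bounds $E[\sup_{l\in T_0^i}|X_s^l-X_{t_{k-1}^n}^l|]\le S^*\Delta_n^{1/2}$ via the moment bound recorded after Assumption \ref{regularity}, exactly as one would for a single fixed row $i$; this gives the clean final bound of order $\sqrt{n\Delta_n^2}/\log(1+p_n^2)\to0$ with no logarithmic price for the maximum over the $p_n^2$ pairs $(i,j)$. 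Strictly speaking, however, after $\sup_i\|\Theta_i^0\|_1$ is pulled out, the remaining expectation still carries $\sup_{1\le i\le p_n}\sup_{l\in T_0^i}=\sup_{l\in\bigcup_i T_0^i}$, a supremum over possibly $p_n$ coordinates, so the paper's step is at best loosely justified. Your route repairs exactly this point: you enlarge the supremum to all $1\le l\le p_n$ and pay for it honestly through the sub-Gaussianity of the Brownian increments (reflection principle) and the Orlicz-norm maximal inequality, collecting a factor $\sqrt{\log(1+p_n)}$ that is then absorbed since $\gamma_n\asymp\log(1+p_n^2)/\sqrt{n\Delta_n}$; your remark that polynomial moment bounds would cost a fatal $p_n^{1/\nu}$ is exactly right, because $\log p_n=o(\sqrt{n\Delta_n})$ permits $p_n$ to grow subexponentially in $n$. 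So your argument is, if anything, more careful than the paper's, at the cost of invoking the sub-Gaussian machinery. Two small points: the maximal inequality you need for $\Phi_2$-Orlicz norms is Lemma 2.2.2 of van der Vaart and Wellner (Lemma 2.2.10 is the Bernstein-tail version, which the paper uses after Bernstein's inequality for martingales), and both you and the paper implicitly read the hypothesis $\gamma_n=O(\log(1+p_n^2)/\sqrt{n\Delta_n})$ as $\gamma_n\asymp\log(1+p_n^2)/\sqrt{n\Delta_n}$, since a lower bound on $\gamma_n$ of this order is what the convergence actually uses.
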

\begin{proof}
Using Markov's inequality, we have that
\begin{eqnarray*}
\lefteqn{
P\left(\sup_{1 \leq i,j \leq p_n} |A_n^{i,j}| \geq \gamma_n\ {\rm and}\  
\sup_{1 \leq i \leq p_n} \hat{\sigma}_i^{-2} \leq \tilde{K}_1 \right)} \\
&\leq& \frac{\tilde{K}_1}{n \Delta_n \gamma_n} \sum_{k=1}^n
E\left[\sup_{1 \leq i,j \leq p_n} |\phi_j(X_{t_{k-1}^n}^j)| 
\left|\int_{t_{k-1}^n}^{t_k^n} (\Theta_i^0)^T (\phi(X_s)-\phi(X_{t_{k-1}^n})) ds \right|\right] \\
&\leq& \frac{\tilde{K}_1 L}{n \Delta_n \gamma_n} \sum_{k=1}^n
E \left[
\sup_{1 \leq i \leq p_n} \left| 
\int_{t_{k-1}^n}^{t_k^n} (\Theta_i^0)^T (\phi(X_s) - \phi(X_{t_{k-1}^n})) ds \right|
\right] \\
&\leq& \frac{\tilde{K}_1 L}{n \Delta_n \gamma_n} \sum_{k=1}^n
\int_{t_{k-1}^n}^{t_k^n} E \left[
\sup_{1 \leq i \leq p_n} \|\Theta_i^0\|_1 \sup_{l \in T_0^i} 
|\phi_l(X_s^l) - \phi_l(X_{t_{k-1}^n}^l)| 
\right] ds\\
&\leq& \frac{\tilde{K}_1 L L' \sup_{1 \leq i \leq p_n} \|\Theta_i^0\|_1}{n \Delta_n \gamma_n} \sum_{k=1}^n
\int_{t_{k-1}^n}^{t_k^n} E\left[
\sup_{l \in T_0^i} |X_s^l- X_{t_{k-1}^n}^l|
\right] ds \\
&\leq& \frac{\tilde{K}_1 L L' \sup_{1 \leq i \leq p_n} \|\Theta_i^0\|_1}{n \Delta_n \gamma_n}
\cdot n \cdot S^* \Delta_n^{\frac{3}{2}}.
\end{eqnarray*}
The right-hand side of the above inequality converges to $0$ under our assumptions.
So we obtain the conclusion.
\qed
\end{proof}
\begin{lem}\label{drift3}
Under Assumption \ref{regularity}, it holds that 
\[\lim_{n \rightarrow \infty}P\left(\sup_{1 \leq i,j \leq p_n} |B_n^{i,j}| \geq \gamma_n \ {\rm and}\  
\sup_{1 \leq i \leq p_n} \hat{\sigma}_i^{-2} \leq \tilde{K}_1\right) = 0.\]
\end{lem}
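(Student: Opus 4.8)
The plan is to follow the same route as the treatment of the term $(III)$ in the proof of Theorem \ref{diffusion}, now applied to the martingale
\[
M_n^{i,j} := \sum_{k=1}^n \phi_j(X_{t_{k-1}^n}^j)\left(W_{t_k^n}^i - W_{t_{k-1}^n}^i\right).
\]
Since $\phi_j(X_{t_{k-1}^n}^j)$ is $\mathcal{F}_{t_{k-1}^n}$-measurable and $W_{t_k^n}^i - W_{t_{k-1}^n}^i$ is independent of $\mathcal{F}_{t_{k-1}^n}$ with mean zero, $\{M_n^{i,j}\}_{k}$ is a $\{\mathcal{F}_{t_k^n}\}$-martingale for each fixed $(i,j)$. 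Moreover, on the event $\{\sup_{1\leq i\leq p_n}\hat{\sigma}_i^{-2}\leq\tilde{K}_1\}$ we have, using Assumption \ref{regularity} $(iv)$,
\[
|B_n^{i,j}| = \frac{|\sigma_i^0|\,\hat{\sigma}_i^{-2}}{n\Delta_n}\,|M_n^{i,j}| \leq \frac{K_3\tilde{K}_1}{n\Delta_n}\,|M_n^{i,j}|.
\]
Hence the intersection in the statement is contained in $\{\sup_{i,j}|M_n^{i,j}|\geq \epsilon_n\}$ with $\epsilon_n := n\Delta_n\gamma_n/(K_3\tilde{K}_1)$, so it suffices to bound $\sup_{1\leq i,j\leq p_n}|M_n^{i,j}|$ and transfer the estimate through the deterministic factor $K_3\tilde{K}_1/(n\Delta_n)$.

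The key step is to verify the Bernstein moment condition for the increments $\xi_k := \phi_j(X_{t_{k-1}^n}^j)(W_{t_k^n}^i - W_{t_{k-1}^n}^i)$. Conditioning on $\mathcal{F}_{t_{k-1}^n}$, the factor $\phi_j(X_{t_{k-1}^n}^j)$ is a constant bounded by $L$ (Assumption \ref{regularity} $(ii)$), while $W_{t_k^n}^i - W_{t_{k-1}^n}^i$ is a centered Gaussian of variance $\Delta_n$ independent of $\mathcal{F}_{t_{k-1}^n}$. Thus for every integer $m\geq 2$, writing $Z\sim N(0,1)$ and using the Gaussian moment bound $E[|Z|^m]\leq m!/2$,
\[
E\!\left[|\xi_k|^m \mid \mathcal{F}_{t_{k-1}^n}\right] \leq L^m\Delta_n^{m/2}E[|Z|^m] \leq \frac{m!}{2}\left(L\sqrt{\Delta_n}\right)^{m-2}L^2\Delta_n.
\]
This puts the increments in Bernstein form with scale $K = L\sqrt{\Delta_n}$ and variance proxy $R_n^2 = nL^2\Delta_n$, so that Bernstein's inequality (\cite{key geer}, Lemma 8.9) yields
\[
P\!\left(|M_n^{i,j}|\geq\epsilon\right)\leq 2\exp\left(-\frac{\epsilon^2}{2\left(nL^2\Delta_n + L\sqrt{\Delta_n}\,\epsilon\right)}\right).
\]

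With this tail bound in hand, I would invoke Lemma 2.2.10 of \cite{key van der vaart} with the Orlicz norm $\|\cdot\|_{\Phi_1}$, taking the maximum over the $p_n^2$ indices $(i,j)$, exactly as for $(III)$ but with $\log(1+p_n)$ replaced by $\log(1+p_n^2)$. This produces a constant $L_1>0$ with
\[
\left\|\sup_{1\leq i,j\leq p_n}|M_n^{i,j}|\right\|_{\Phi_1}\leq L_1\left\{L\sqrt{\Delta_n}\log(1+p_n^2) + \sqrt{nL^2\Delta_n\log(1+p_n^2)}\right\}.
\]
Then, by the reduction above and Markov's inequality for the Orlicz norm,
\[
P\!\left(\sup_{i,j}|B_n^{i,j}|\geq\gamma_n,\ \sup_i\hat{\sigma}_i^{-2}\leq\tilde{K}_1\right)\leq\Phi_1\!\left(\frac{\epsilon_n}{\left\|\sup_{i,j}|M_n^{i,j}|\right\|_{\Phi_1}}\right)^{-1}.
\]
Since $\gamma_n\asymp\log(1+p_n^2)/\sqrt{n\Delta_n}$ we have $\epsilon_n\asymp\sqrt{n\Delta_n}\log(1+p_n^2)$, and comparing with the two terms of the Orlicz bound gives ratios of order $\sqrt{n}$ and $\sqrt{\log(1+p_n^2)}$ respectively, both of which diverge as $n\to\infty$ by Assumption \ref{regularity} $(i)$ (in particular $p_n\to\infty$). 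Hence the argument of $\Phi_1$ tends to infinity and the right-hand side tends to $0$. The main obstacle is the second step: pinning down the Gaussian moment estimate in the precise Bernstein form $\frac{m!}{2}K^{m-2}v^2$ with the correct $\Delta_n$-dependence of $K$ and $v^2$, since these constants propagate all the way into the final comparison; once that is set up correctly, the remainder parallels the proof of Theorem \ref{diffusion}.
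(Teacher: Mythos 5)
Your proposal is correct and follows essentially the same route as the paper's proof: the same martingale $\tilde{M}_n^{i,j}$, the Bernstein moment verification feeding van de Geer's Lemma 8.9, the Orlicz-norm maximal inequality (Lemma 2.2.10 of van der Vaart and Wellner) over the $p_n^2$ indices, and the final Markov/rate comparison under $\gamma_n \asymp \log(1+p_n^2)/\sqrt{n\Delta_n}$. The only (harmless) differences are cosmetic: you use the sharper Gaussian bound $E[|Z|^m]\leq m!/2$ so your Bernstein constants lack the paper's factor of $2$, and you uniformize the threshold via $|\sigma_i^0|<K_3$ where the paper lets $\epsilon$ depend on $i$, which is if anything slightly cleaner.
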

\begin{proof}
We apply Bernstein's inequality for martingales to the following terminal value of martingale : 
\[\tilde{M}_n^{i,j} = \sum_{k=1}^n \phi_j(X_{t_{k-1}^n}^j) (W_{t_k^n}^i-W_{t_{k-1}^n}^i).\]
For all integers $m \geq 2$, it holds that 
\begin{eqnarray*}
\lefteqn{
\frac{1}{n} \sum_{k=1}^n 
E\left[
|\phi_j(X_{t_{k-1}^n}^j)|^m |W_{t_k^n}^i-W_{t_{k-1}^n}^i|^m | \mathcal{F}_{t_{k-1}^n}
\right]} \\
&=&
\frac{1}{n} \sum_{k=1}^n |\phi_j(X_{t_{k-1}^n}^j)|^m E[|W_{t_k^n}^i - W_{t_{k-1}^n}^i|^m] \\
&\leq& L^m 
\Delta_n^\frac{m}{2} \frac{2^{\frac{m}{2}}\Gamma(\frac{m+1}{2})}{\pi^{\frac{1}{2}}} \\
&\leq& \frac{m!}{2} (L \sqrt{2 \Delta_n})^{m-2} L^2 (2 \Delta_n).
\end{eqnarray*}
Put 
\[K := L \sqrt{2 \Delta_n},\quad R^2 := L^2 (2 \Delta_n).\]
It follows from Bernstein's inequality that for all $\epsilon >0$ 
\[P(|\tilde{M}_n^{i,j}| \geq \epsilon) \leq 2 \exp \left(- \frac{\epsilon^2}{2(\epsilon K + nR^2)}\right).\]
Using Lemma 2.2.10 from 
\cite{key van der vaart}, 
we have that there exists a constant $L_2 >0$ depending only on $\Phi_1$ such that 
\[\left\|\sup_{1 \leq i,j \leq p_n} |\tilde{M}_n^{i,j}| \right\|_{\Phi_1}
\leq L_2 \{K \log(1+p_n^2) + \sqrt{nR^2 \log(1+p_n^2)}\}.
\]
Using Markov's inequality for $\epsilon = n \Delta_n \gamma_n/(\sigma_i^0 \tilde{K}_1)$, we obtain that 
\begin{eqnarray*}
\lefteqn{
P\left(\sup_{1 \leq i,j \leq p_n} |B_n^{i,j}| \geq \gamma_n \ {\rm and}\  
\sup_{1 \leq i \leq p_n}\hat{\sigma}_i^{-2} \leq \tilde{K}_1\right)} \\
&\leq&
P\left(\sup_{1 \leq i,j \leq p_n} |\tilde{M}_n^{i,j}| \geq \epsilon\right) \\
&\leq&
\Phi_1\left(\frac{\epsilon}{L_2 \{K \log(1+p_n^2) + \sqrt{nR^2 \log(1+p_n^2)}\}} 
\right)^{-1} \\
&\rightarrow& 0.
\end{eqnarray*}
\qed
\end{proof}
Using the above lemmas, we obtain the next theorem.
\begin{thm}
Under Assumption \ref{regularity}, it holds that 
\[\lim_{n \rightarrow \infty}
P\left(
\sup_{1 \leq i \leq p_n} \|\psi_n(\Theta_i^0)\|_\infty \geq 2\gamma_n
\right)
= 0.
\]
\end{thm}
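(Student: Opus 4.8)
The plan is to obtain the statement as a direct consequence of Lemmas \ref{drift1} and \ref{drift3}, combined with the remark following Theorem \ref{diffusion}. The only work involved is a clean union-bound decomposition together with the removal of the auxiliary event $\{\sup_i \hat{\sigma}_i^{-2} \leq \tilde{K}_1\}$ that appears in the two lemmas but not in the target probability.

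First I would exploit the componentwise decomposition $\psi_n^j(\Theta_i^0) = A_n^{i,j} + B_n^{i,j}$ already introduced above. Since $\|\psi_n(\Theta_i^0)\|_\infty = \sup_{1 \leq j \leq p_n} |\psi_n^j(\Theta_i^0)|$, the triangle inequality gives
\[
\sup_{1 \leq i \leq p_n} \|\psi_n(\Theta_i^0)\|_\infty
\leq \sup_{1 \leq i,j \leq p_n} |A_n^{i,j}| + \sup_{1 \leq i,j \leq p_n} |B_n^{i,j}|.
\]
Consequently, on the event $\{\sup_i \|\psi_n(\Theta_i^0)\|_\infty \geq 2\gamma_n\}$ at least one of the two suprema on the right must be at least $\gamma_n$, so
\[
P\left(\sup_{1 \leq i \leq p_n} \|\psi_n(\Theta_i^0)\|_\infty \geq 2\gamma_n\right)
\leq P\left(\sup_{1 \leq i,j \leq p_n} |A_n^{i,j}| \geq \gamma_n\right)
+ P\left(\sup_{1 \leq i,j \leq p_n} |B_n^{i,j}| \geq \gamma_n\right).
\]

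Next I would dispose of each term on the right. For the $A$-term I would split according to whether $\sup_i \hat{\sigma}_i^{-2} \leq \tilde{K}_1$ holds, writing
\[
P\left(\sup_{1 \leq i,j \leq p_n} |A_n^{i,j}| \geq \gamma_n\right)
\leq P\left(\sup_{1 \leq i,j \leq p_n} |A_n^{i,j}| \geq \gamma_n\ {\rm and}\ \sup_{1 \leq i \leq p_n}\hat{\sigma}_i^{-2} \leq \tilde{K}_1\right)
+ P\left(\sup_{1 \leq i \leq p_n}\hat{\sigma}_i^{-2} > \tilde{K}_1\right).
\]
The first summand tends to $0$ by Lemma \ref{drift1}, and the second tends to $0$ by the remark following Theorem \ref{diffusion}. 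The identical splitting applied to the $B$-term, using Lemma \ref{drift3} in place of Lemma \ref{drift1}, shows that it also vanishes in the limit. Adding the two bounds yields the claim.

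There is essentially no genuine obstacle here: the analytic content has already been absorbed into the two lemmas and into the control of $\hat{\sigma}_i^{-2}$, so what remains is purely the bookkeeping of the union bound. The only point demanding slight care is ensuring that the event $\{\sup_i \hat{\sigma}_i^{-2} \leq \tilde{K}_1\}$ used to invoke the lemmas is correctly peeled off, which is exactly what the remark after Theorem \ref{diffusion} is designed to guarantee.
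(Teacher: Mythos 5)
Your proof is correct and takes essentially the same route as the paper: combine Lemmas \ref{drift1} and \ref{drift3} via a union bound over the decomposition $\psi_n^j(\Theta_i^0) = A_n^{i,j} + B_n^{i,j}$, then strip off the auxiliary event $\{\sup_{1\leq i \leq p_n}\hat{\sigma}_i^{-2} \leq \tilde{K}_1\}$ using the remark following Theorem \ref{diffusion}. If anything, your bookkeeping is tighter than the paper's, whose own proof works with the threshold $3\gamma_n$ rather than the stated $2\gamma_n$; your two-term split correctly delivers the constant $2$.
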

\begin{proof}
It is obvious that Lemma \ref{drift1} and \ref{drift3} imply that 
\[
P\left(
\sup_{1 \leq i \leq p_n} \|\psi_n(\Theta_i^0)\|_\infty \geq 3\gamma_n \ {\rm and}\  
\sup_{1 \leq i \leq p_n} \hat{\sigma}_i^{-2} \leq \tilde{K}_1
\right)
\rightarrow 0.
\]
Noting that  
\begin{eqnarray*}
\lefteqn{
P\left(
\sup_{1 \leq i \leq p_n} \|\psi_n(\Theta_i^0)\|_\infty \geq 3\gamma_n
\right)} \\
&=&
P\left(
\sup_{1 \leq i \leq p_n} \|\psi_n(\Theta_i^0)\|_\infty \geq 3\gamma_n \ {\rm and}\ 
\sup_{1 \leq i \leq p_n} \hat{\sigma}_i^{-2} \leq \tilde{K}_1
\right)\\
&&\ \ \ \ \ +
P\left(
\sup_{1 \leq i \leq p_n} \|\psi_n(\Theta_i^0)\|_\infty \geq 3\gamma_n \ {\rm and}\ 
\sup_{1 \leq i \leq p_n} \hat{\sigma}_i^{-2} \geq \tilde{K}_1
\right)
\end{eqnarray*}
and that
\begin{align*}
P\left(
\sup_{1 \leq i \leq p_n} \|\psi_n(\Theta_i^0)\|_\infty \geq 3\gamma_n \ {\rm and}\  
\sup_{1 \leq i \leq p_n} \hat{\sigma}_i^{-2} \geq \tilde{K}_1
\right)
&\leq P\left(\sup_{1 \leq i \leq p_n}\hat{\sigma}_i^{-2} \geq \tilde{K}_1\right) \rightarrow 0,
\end{align*}
we obtain the conclusion.
\qed
\end{proof}
\subsection{Some discussions on the Hessian}
Define the following factors for $V_n^i$.
\begin{defi}\label{factor for linear}
For every index set $T \subset \{1,\ 2,\ \cdots,\ p_n\}$ and $h \in \mathbb{R}^{p_n}$, 
$h_T$ is a $\mathbb{R}^{|T|}$ dimensional sub-vector of $h$ constructed by extracting the components of $h$ corresponding to the indices in $T$. Define the set $C_T$ by 
\[C_T := \{h \in \mathbb{R}^{p_n} : \|h_{T^c}\|_1 \leq \|h_{T}\|_1\} .\]
We introduce the following factors.
\begin{description}
\item[$(i)$ Compatibility factor]
\[
\kappa(T_0^i,V_n^i) := \inf_{0 \not= h \in C_{T_0^i}} \frac{S_i^{\frac{1}{2}}(h^T V_n^i h)^{\frac{1}{2}}}{\|h_{T_0^i}\|_1}
\]
\item[$(ii)$ Weak cone invertibility factor]
\[
F_q(T_0^i,V_n^i) := \inf_{0 \not= h \in C_{T_0^i}} 
\frac{S_i^{\frac{1}{q}}h^T V_n^i h}{\|h_{T_0^i}\|_1\|h\|_q}, \quad q \in [1,\infty),
\]
\[
F_\infty(T_0^i,V_n^i) := \inf_{0 \not= h \in C_{T_0^i}} 
\frac{(h^T V_n^i h)^{\frac{1}{2}}}{\|h\|_\infty}.
\]
\item[$(iii)$ Restricted eigenvalue]
\[
RE(T_0^i,V_n^i) := \inf_{0 \not= h \in C_{T_0^i}} \frac{(h^T V_n^i h)^{\frac{1}{2}}}{\|h\|_2}.
\]
\end{description}
\end{defi}
We assume that $\kappa(T_0^i,V_n^i)$ satisfies the following condition.
\begin{assumption}\label{matrix2}
For every $\epsilon >0$, there exist $\delta>0$ and $n_0 \in \mathbb{N}$ such that for all $n \geq n_0$, 
\[
P\left(\inf_{1 \leq i \leq p_n} \kappa(T_0^i,V_n^i) >\delta \right) \geq 1-\epsilon.
\]
\end{assumption}
Noting that $\|h_{T_0^i}\|_1^q \geq \|h_{T_0^i}\|_q^q$ for all $q \geq 1$, 
we can see that 
$\kappa(T_0^i;V_n^i)  \leq 2\sqrt{S_i} RE(T_0^i;V_n^i)$, and 
$\kappa(T_0^i;V_n^i) \leq F_q(T_0^i;V_n^i)$. 
So under Assumption \ref{matrix2}, $RE(T_0^i;V_n^i)$ and $F_q(T_0^i;V_n^i)$ also satisfy 
the corresponding conditions.
See 
\cite{key buhlmann} for details of the matrix conditions to deal with 
the sparsity.
\subsection{The consistency of the drift estimator}
The next theorems give the $l_q$ consistency of $\hat{\Theta}_i$ uniformly in $i$ for every $q \in [1,\infty]$.
\begin{thm}\label{l_2}
Under Assumption \ref{regularity} and \ref{matrix2}, the following (i) and (ii) hold true. 
\begin{description}
\item[$(i)$]
It holds that 
\[
\lim_{n \rightarrow \infty}P\left(\sup_{1 \leq i \leq p_n} \|\hat{\Theta}_i - \Theta_i^0\|_2^2
\geq \frac{4 \sup_{1 \leq i \leq p_n} \|\Theta_i^0\|_1 \gamma_n}{\inf_{1 \leq i \leq p_n} RE^2(T_0^i,V_n^i)}
\right) = 0.
\]
In particular, it holds that $\sup_{1 \leq i \leq p_n}\|\hat{\Theta}_i - \Theta_i^0\|_2 \rightarrow^p 0$. 
\item{$(ii)$}
It holds that
\[
\lim_{n \rightarrow \infty}P\left(\sup_{1 \leq i \leq p_n} \|\hat{\Theta}_i - \Theta_i^0\|_\infty^2
\geq \frac{4 \sup_{1 \leq i \leq p_n} \|\Theta_i^0\|_1 \gamma_n}{\inf_{1 \leq i \leq p_n} F_\infty^2(T_0^i,V_n^i)}
\right) = 0.
\]
In particular, it holds that $\sup_{1 \leq i \leq p_n}\|\hat{\Theta}_i - \Theta_i^0\|_\infty \rightarrow^p 0$. 
\end{description}
\end{thm}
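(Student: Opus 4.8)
The plan is to run the classical Dantzig-selector ``basic inequality'' argument, made uniform in $i$. The starting point is that $\psi_n$ is affine in its argument: directly from the definition one reads off $\psi_n(\Theta_i) = \psi_n(0) - V_n^i\Theta_i$, so that for the error vector $h_i := \hat{\Theta}_i - \Theta_i^0$ the exact identity
\[
V_n^i h_i = \psi_n(\Theta_i^0) - \psi_n(\hat{\Theta}_i)
\]
holds. I would then work on the event $\Omega_n$ on which simultaneously (a) $\Theta_i^0$ is feasible for $\mathcal{C}_n^i$ for every $i$, and (b) $\inf_{1\le i\le p_n} RE(T_0^i,V_n^i) > \delta'$ and $\inf_{1\le i\le p_n} F_\infty(T_0^i,V_n^i) > \delta'$ for some constant $\delta'>0$. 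The preceding theorem controls $\sup_{1\le i\le p_n}\|\psi_n(\Theta_i^0)\|_\infty$ at the order of $\gamma_n$ and hence gives $P(\text{(a)}) \to 1$; Assumption \ref{matrix2}, combined with $\|h\|_\infty \le \|h\|_2$ and $\|h_{T_0^i}\|_1 \le \sqrt{S_i}\,\|h_{T_0^i}\|_2$ which yield $F_\infty(T_0^i,V_n^i) \ge RE(T_0^i,V_n^i) \ge \kappa(T_0^i,V_n^i)/(2\sqrt{S^*})$ as noted after Definition \ref{factor for linear}, gives $P(\text{(b)}) \ge 1-\epsilon$ for $n$ large. Intersecting, $P(\Omega_n)$ can be made arbitrarily close to $1$.

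On $\Omega_n$, feasibility of $\Theta_i^0$ together with the optimality of $\hat{\Theta}_i$ gives $\|\hat{\Theta}_i\|_1 \le \|\Theta_i^0\|_1$; since $\Theta_i^0$ is supported on $T_0^i$, the reverse triangle inequality then produces the cone condition $\|(h_i)_{(T_0^i)^c}\|_1 \le \|(h_i)_{T_0^i}\|_1$, i.e. $h_i \in C_{T_0^i}$. Next, because both $\Theta_i^0$ and $\hat{\Theta}_i$ lie in $\mathcal{C}_n^i$, applying H\"older's inequality to the identity above yields
\[
0 \le h_i^T V_n^i h_i \le \|h_i\|_1\left(\|\psi_n(\Theta_i^0)\|_\infty + \|\psi_n(\hat{\Theta}_i)\|_\infty\right) \le 2\gamma_n\|h_i\|_1 \le 4\gamma_n\|\Theta_i^0\|_1,
\]
where the positivity uses that $V_n^i$ is positive semidefinite and the final step uses $\|h_i\|_1 \le \|\hat{\Theta}_i\|_1 + \|\Theta_i^0\|_1 \le 2\|\Theta_i^0\|_1$.

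It remains to convert this quadratic-form bound into the two norm bounds, which is exactly where $h_i \in C_{T_0^i}$ enters. For (i), the restricted eigenvalue gives $RE^2(T_0^i,V_n^i)\|h_i\|_2^2 \le h_i^T V_n^i h_i$, hence $\|h_i\|_2^2 \le 4\gamma_n\|\Theta_i^0\|_1/RE^2(T_0^i,V_n^i)$; for (ii), the weak cone invertibility factor gives $F_\infty^2(T_0^i,V_n^i)\|h_i\|_\infty^2 \le h_i^T V_n^i h_i$ and the analogous bound. Taking the supremum over $i$ and replacing the denominators by $\inf_i RE^2(T_0^i,V_n^i)$ and $\inf_i F_\infty^2(T_0^i,V_n^i)$ yields the two displayed inequalities on $\Omega_n$; since $P(\Omega_n) \to 1$, the stated probabilities tend to $0$. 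For the ``in particular'' assertions, note that $\sup_i\|\Theta_i^0\|_1 \le S^* K_1$ by the sparsity bound and Assumption \ref{regularity}$(iv)$, while $\gamma_n = O(\log(1+p_n^2)/\sqrt{n\Delta_n}) \to 0$ since $\log p_n = o(\sqrt{n\Delta_n})$ by Assumption \ref{regularity}$(i)$; together with the lower bounds on the factors from Assumption \ref{matrix2}, the right-hand sides converge to $0$ in probability, giving $\sup_i\|h_i\|_2 \to^p 0$ and $\sup_i\|h_i\|_\infty \to^p 0$.

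I expect no deep analytic obstacle here: the conceptual skeleton is the standard chain (cone condition $\Rightarrow$ basic inequality $\Rightarrow$ inversion through the restricted eigenvalue or $F_\infty$), and the single genuine structural input is the affine identity $V_n^i h_i = \psi_n(\Theta_i^0) - \psi_n(\hat{\Theta}_i)$ that makes the argument go through. The delicate part will be the \emph{uniformity in $i$} over the growing index set $\{1,\dots,p_n\}$: every estimate must be carried with a supremum inside the probability, and the two independent sources of randomness, namely feasibility of all the $\Theta_i^0$ (from the preceding theorem) and the uniform lower bound on all the factors (from Assumption \ref{matrix2}), must be intersected into one high-probability event before the deterministic chain is run. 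Checking $\gamma_n\sup_i\|\Theta_i^0\|_1 \to 0$ also quietly relies on both the uniform sparsity constant $S^*$ and the rate condition on $\gamma_n$.
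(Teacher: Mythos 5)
Your proposal is correct and follows essentially the same route as the paper's own proof: feasibility of $\Theta_i^0$ (from the preceding theorem on $\sup_i\|\psi_n(\Theta_i^0)\|_\infty$) plus optimality of $\hat{\Theta}_i$ give the cone condition and the basic inequality $h_i^T V_n^i h_i \leq 4\gamma_n\|\Theta_i^0\|_1$, which is then inverted through $RE$ and $F_\infty$; your exact affine identity $V_n^i h_i = \psi_n(\Theta_i^0)-\psi_n(\hat{\Theta}_i)$ is just what the paper calls the Taylor expansion, since $l_n$ is quadratic in $\Theta_i$. If anything, your write-up is slightly more careful than the paper's in making the high-probability event intersection explicit and in verifying the ``in particular'' convergence statements via $\gamma_n\sup_i\|\Theta_i^0\|_1 \to 0$ and Assumption \ref{matrix2}.
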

\begin{proof}
It is sufficient that 
$\sup_{1 \leq i \leq_n} \|\psi_n(\Theta_i^0)\|_\infty \leq \gamma_n$ implies that 
\[\sup_{1 \leq i \leq p_n} \|\hat{\Theta}_i - \Theta_i^0\|_2^2
\leq \frac{4 \sup_{1 \leq i \leq p_n} \|\Theta_i^0\|_1 \gamma_n}{\inf_{1 \leq i \leq p_n} RE^2(T_0^i,V_n^i)}.\]
By the definition of $\hat{\Theta}_i$, we have that  
\[
\sup_{1 \leq i \leq p_n} \|\psi_n(\hat{\Theta}_i)\|_\infty \leq \sup_{1 \leq i \leq p_n} \gamma_n = \gamma_n.
\]
It follows from triangle inequality that 
\[\sup_{1 \leq i \leq p_n} \|\psi_n(\hat{\Theta}_i) - \psi_n(\Theta_i^0)\|_\infty \leq 2 \gamma_n.\]
Put $h_i := \hat{\Theta}_i - \Theta_i^0$, then we can show that $h_i \in C_{T_0^i}$ 
by the same way as 
\cite{key Fujimori}. 
Using Taylor expansion, we have that 
\[h_i^T[\psi_n(\hat{\Theta}_i) - \psi_n(\Theta_i^0)] = h_i^T V_n^i h_i.\]
So it holds that 
\begin{align*}
\sup_{1 \leq i \leq p_n} h_i^T V_n^i h_i
&= \sup_{1 \leq i \leq p_n}  h_i^T [\psi_n(\hat{\Theta}_i) - \psi_n(\Theta_i^0)] \\
&\leq \sup_{1 \leq i \leq p_n} \|h_i\|_1 \|\psi_n(\hat{\Theta}_i) - \psi_n(\Theta^0_i)\|_\infty \\
&\leq 4 \sup_{1 \leq i \leq p_n} \|\Theta_i^0\|_1 \|\psi_n(\Theta_i^0)\|_\infty \\
&\leq 4 \sup_{1 \leq i \leq p_n} \|\Theta_i^0\|_1 \gamma_n.
\end{align*}
By the definition of $RE(T_0^i,V_n^i)$, we obtain that  
\begin{align*}
RE^2(T_0^i,V_n^i) \|h_i\|_2^2 &\leq h_i^T V_n^i h \\
\sup_{1 \leq i \leq p_n} RE^2(T_0^i,V_n^i) \|h_i\|_2^2 
&\leq 4 \sup_{1 \leq i \leq p_n} \|\Theta_i^0\|_1 \gamma_n \\
\sup_{1 \leq i \leq p_n} \|\hat{\Theta}_i - \Theta_i^0\|_2^2 
&\leq  \frac{4 \sup_{1 \leq i \leq p_n} \|\Theta_i^0\|_1 \gamma_n}{\inf_{1 \leq i \leq p_n} RE^2(T_0^i,V_n^i)}.
\end{align*}
This yields our conclusion in (i).
Using the factor $F_\infty(T_0^i,V_n^i)$ in place of $RE(T_0^i,V_n^i)$, 
we obtain the conclusion in (ii) by the similar way.
\qed
\end{proof}
\begin{thm}\label{l_q}
Under Assumption \ref{regularity} and \ref{matrix2}, the following (i) and (ii) hold true.
\begin{description}
\item[$(i)$]
It holds that 
\[
\lim_{n \rightarrow \infty}P\left(\sup_{1 \leq i \leq p_n} \|\hat{\Theta}_i - \Theta_i^0\|_1
\geq \frac{8 S^* \gamma_n}{\inf_{1 \leq i \leq p_n} \kappa^2(T_0^i,V_n^i)}
\right) = 0.
\]
In particular, it holds that $\sup_{1 \leq i \leq p_n}\|\hat{\Theta}_i - \Theta_i^0\|_2 \rightarrow^p 0$.
\item[$(ii)$]
It holds for every $q \in [1,\infty)$ that 
\[
\lim_{n \rightarrow \infty}P\left(\sup_{1 \leq i \leq p_n} \|\hat{\Theta}_i - \Theta_i^0\|_q
\geq \frac{4 S^{*\frac{1}{q}} \gamma_n}{\inf_{1 \leq i \leq p_n} F_q(T_0^i,V_n^i)}
\right) = 0.
\]
In particular, it holds that $\sup_{1 \leq i \leq p_n} \|\hat{\Theta}_i - \Theta^0_i\|_q \rightarrow^p 0$.
\end{description}
\end{thm}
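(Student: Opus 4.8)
The plan is to follow the argument of Theorem \ref{l_2} almost verbatim, substituting the compatibility factor $\kappa$ for $RE$ in part (i) and the weak cone invertibility factor $F_q$ in part (ii). As there, the estimate is deterministic once we work on the event $\{\sup_{1 \leq i \leq p_n}\|\psi_n(\Theta_i^0)\|_\infty \leq \gamma_n\}$, whose probability tends to $1$, intersected with the event of Assumption \ref{matrix2} on which the factors are bounded below uniformly in $i$. Setting $h_i := \hat{\Theta}_i - \Theta_i^0$, I would reuse the two facts already established in the proof of Theorem \ref{l_2}: the cone membership $h_i \in C_{T_0^i}$, which gives $\|h_i\|_1 \leq 2\|h_{i,T_0^i}\|_1$, and the identity $h_i^T[\psi_n(\hat{\Theta}_i)-\psi_n(\Theta_i^0)] = h_i^T V_n^i h_i$ coming from the fact that $V_n^i$ does not depend on $\Theta$. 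Combining these with $\|\psi_n(\hat{\Theta}_i)-\psi_n(\Theta_i^0)\|_\infty \leq 2\gamma_n$ and H\"older's inequality produces the single linear bound
\[
h_i^T V_n^i h_i \leq \|h_i\|_1 \|\psi_n(\hat{\Theta}_i)-\psi_n(\Theta_i^0)\|_\infty \leq 2\gamma_n \|h_i\|_1 \leq 4\gamma_n \|h_{i,T_0^i}\|_1,
\]
which drives both parts.

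For part (i) I would insert this into the definition of the compatibility factor, obtaining $\kappa^2(T_0^i,V_n^i)\|h_{i,T_0^i}\|_1^2 \leq S_i\, h_i^T V_n^i h_i \leq 4 S_i \gamma_n \|h_{i,T_0^i}\|_1$; cancelling one power of $\|h_{i,T_0^i}\|_1$, then using $\|h_i\|_1 \leq 2\|h_{i,T_0^i}\|_1$ and $S_i \leq S^*$, yields $\|h_i\|_1 \leq 8 S^* \gamma_n/\kappa^2(T_0^i,V_n^i)$. For part (ii) the definition of $F_q$ gives $F_q(T_0^i,V_n^i)\|h_{i,T_0^i}\|_1\|h_i\|_q \leq S_i^{\frac{1}{q}} h_i^T V_n^i h_i \leq 4 S_i^{\frac{1}{q}}\gamma_n\|h_{i,T_0^i}\|_1$, and after the same cancellation $\|h_i\|_q \leq 4 S^{*\frac{1}{q}}\gamma_n/F_q(T_0^i,V_n^i)$. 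Taking the supremum over $i$ on the left and replacing each factor by its infimum over $i$ on the right gives the two deterministic inequalities, and intersecting with the two high-probability events above yields the stated probability bounds.

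For the \emph{in particular} assertions I would appeal to the rate conditions of Assumption \ref{regularity}: since $\log p_n = o(\sqrt{n\Delta_n})$ we have $\gamma_n = O(\log(1+p_n^2)/\sqrt{n\Delta_n}) \to 0$, while $S^*$ is a fixed constant and the denominators remain bounded away from $0$ in probability by Assumption \ref{matrix2}; hence each right-hand side tends to $0$ in probability, and because $\|\cdot\|_q \leq \|\cdot\|_1$ the $l_1$ bound in (i) already forces $l_q$ consistency for every $q \geq 1$. I do not anticipate a genuine obstacle, since the whole derivation is parallel to Theorem \ref{l_2}; the only points requiring care are keeping the suprema and infima over $i$ in their correct positions throughout, and intersecting the score-control event with the factor event of Assumption \ref{matrix2} rather than handling them in isolation.
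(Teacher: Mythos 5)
Your proposal is correct and follows essentially the same route as the paper: both proofs reuse the bound $h_i^T V_n^i h_i \leq 2\gamma_n\|h_i\|_1$ and the cone membership $h_i \in C_{T_0^i}$ from the proof of Theorem \ref{l_2}, then plug in the definitions of $\kappa(T_0^i,V_n^i)$ and $F_q(T_0^i,V_n^i)$ and cancel one power of $\|h_{i,T_0^i}\|_1$ to get the stated constants $8S^*$ and $4S^{*\frac{1}{q}}$. Your version is in fact slightly more careful than the paper's in keeping the per-$i$ inequalities separate before passing to suprema and infima, and in spelling out why the right-hand sides tend to zero for the \emph{in particular} claims.
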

\begin{proof}
It follows from the proof of Theorem \ref{l_2} that 
\[
\sup_{1 \leq i \leq p_n} h_i^T V_n^i h_i
\leq 2 \sup_{1 \leq i \leq p_n} \|h_i\|_1 \gamma_n.
\]
So by the definition of $\kappa(T_0^i,V_n^i)$, we have that 
\begin{eqnarray*}
\kappa^2(T_0^i,V_n^i) \sup_{1 \leq i \leq p_n} \|h_i\|^2_1
&\leq& 4 S^* \sup_{1 \leq i \leq p_n} h_i^T V_n^i h_i \\
&\leq& 8S^* \sup_{1 \leq i \leq p_n} \|h_i\|_1 \gamma_n
\end{eqnarray*}
We therefore obtain that 
\[
\sup_{1 \leq i \leq p_n} \|h_i\|_1 \leq \frac{8S^* \gamma_n}{\inf_{1 \leq i \leq p_n} \kappa^2(T_0^i,V_n^i)}.
\]
This yields our conclusion in (i). 

On the other hand, by the definition of the factor $F_q(T_0^i,V_n^i)$, we have that 
\[
F_q(T_0^i,V_n^i) \leq \frac{4 S^{*\frac{1}{q}}\gamma_n}{\|h_i\|_q}.
\]
This yields our conclusion in (ii).
\qed
\end{proof}
\section{Variable selection by the Dantzig selector}
\subsection{Estimator for the support index set of the drift coefficients}
In this subsection, we propose the estimator of the support index set $T_0^i$ of the true value $\Theta_i^0$
as follows.
\[
\hat{T}_n^i := \{j : |\hat{\Theta}_{ij}| > \gamma_n^{\frac{1}{2}}\}.
\]
Then, we can prove that $\hat{T}_n^i = T_0^i$ for sufficiently large $n$ with large probability.
\begin{thm}\label{selection}
Under Assumption \ref{regularity} and \ref{matrix2}, it holds that 
\[
\lim_{n \rightarrow \infty} P\left(\hat{T}_n^i = T_0^i\ {\rm for\ all}\ i \in \{1,2,\ldots,p_n\}\right) = 1.
\]
\end{thm}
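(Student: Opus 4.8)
The plan is to establish the two inclusions $T_0^i \subseteq \hat{T}_n^i$ and $\hat{T}_n^i \subseteq T_0^i$ \emph{simultaneously} for all $i \leq p_n$, working on a single event whose probability tends to one. That event is the intersection of the high-probability event in Theorem \ref{l_2}(ii) (the uniform $\ell_\infty$ consistency) with the event $\{\inf_{1\le i\le p_n} F_\infty(T_0^i,V_n^i) > \delta\}$ furnished by Assumption \ref{matrix2}, on which the matrix factors are bounded below. On this event I would write $\eta_n := \sup_{1\le i\le p_n}\|\hat{\Theta}_i - \Theta_i^0\|_\infty$; combining Theorem \ref{l_2}(ii) with the bound $\sup_i\|\Theta_i^0\|_1 \le S^* K_1$ (which follows from the sparsity $S_i \le S^*$ and Assumption \ref{regularity}(iv)) gives $\eta_n \le C\,\gamma_n^{1/2}$ for an absolute constant $C$, and in particular $\eta_n \rightarrow^p 0$ since $\gamma_n \rightarrow 0$. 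The whole argument is then a comparison of $\eta_n$ against the threshold $\gamma_n^{1/2}$ on the two types of coordinates.

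\textbf{Correct inclusion (no false negatives).} For $j \in T_0^i$ the reverse triangle inequality yields $|\hat{\Theta}_{ij}| \ge |\Theta_{ij}^0| - \eta_n \ge K_2 - \eta_n$, using the signal lower bound $|\Theta_{ij}^0| > K_2$ from Assumption \ref{regularity}(iv). Since $K_2$ is a fixed positive constant while both $\eta_n$ and the threshold $\gamma_n^{1/2}$ tend to $0$, for all sufficiently large $n$ we have $K_2 - \eta_n > \gamma_n^{1/2}$, hence $j \in \hat{T}_n^i$. Because $\eta_n$ is a bound that is uniform in $i$, this holds simultaneously over all $i$ and all $j \in T_0^i$.

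\textbf{Correct exclusion (no false positives).} For $j \notin T_0^i$ we have $\Theta_{ij}^0 = 0$, so $|\hat{\Theta}_{ij}| = |\hat{\Theta}_{ij} - \Theta_{ij}^0| \le \eta_n$, and to obtain $j \notin \hat{T}_n^i$ I must show $\eta_n \le \gamma_n^{1/2}$. This is the delicate direction and the main obstacle: the threshold $\gamma_n^{1/2}$ lies at exactly the rate of the uniform error bound $\eta_n \le C\gamma_n^{1/2}$, so the conclusion cannot come from a crude rate comparison but requires control of the multiplicative constant, namely $C \le 1$. Here $C = \big(4\sup_i\|\Theta_i^0\|_1 / \inf_i F_\infty^2(T_0^i,V_n^i)\big)^{1/2}$, whose numerator is bounded by $4S^*K_1$ and whose denominator is bounded below on the chosen event, so $C$ is indeed an absolute constant; one closes the gap either through the freedom in the leading constant of the tuning sequence $\gamma_n$ or, equivalently, by sharpening the constant in Theorem \ref{l_2}(ii), after which $\eta_n < \gamma_n^{1/2}$ holds on the event and $|\hat{\Theta}_{ij}| < \gamma_n^{1/2}$ for every $j \notin T_0^i$.

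Finally, intersecting the two inclusions shows that on the chosen event $\hat{T}_n^i = T_0^i$ for every $i \in \{1,\dots,p_n\}$ at once; since that event has probability tending to one by Theorem \ref{l_2}(ii) and Assumption \ref{matrix2}, the claimed limit follows. The inclusion step is immediate from the fixed signal gap $K_2$, and the only genuinely nontrivial point is ensuring, in the exclusion step, that the threshold $\gamma_n^{1/2}$ sits strictly above the uniform estimation error rather than merely at its rate.
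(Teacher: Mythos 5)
Your event construction and your inclusion step (no false negatives) match the paper's argument, and you have correctly isolated the one delicate point: with the threshold set at $\gamma_n^{1/2}$, Theorem \ref{l_2}(ii) only bounds the uniform error by $C\gamma_n^{1/2}$ with $C=\bigl(4\sup_i\|\Theta_i^0\|_1/\inf_i F_\infty^2(T_0^i,V_n^i)\bigr)^{1/2}$, which is of exactly the same order as the threshold. But your proposed way of closing that gap is not valid, so the exclusion step genuinely fails as written. Rescaling the tuning parameter, $\gamma_n\mapsto a\gamma_n$, accomplishes nothing: the selection threshold in the definition of $\hat T_n^i$ is $\gamma_n^{1/2}$ for the \emph{same} $\gamma_n$ that defines the Dantzig constraint $\mathcal{C}_n^i$, so the error bound and the threshold both get multiplied by $a^{1/2}$ and the ratio $C$ is scale invariant. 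Likewise there is no warrant for ``sharpening the constant in Theorem \ref{l_2}(ii)'' down to $1$: nothing in Assumption \ref{regularity} or \ref{matrix2} relates $\sup_i\|\Theta_i^0\|_1$ to $\inf_i F_\infty^2$; Assumption \ref{matrix2} only bounds the factors away from zero by some possibly tiny $\delta$, so $C$ can be arbitrarily large.

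What is actually needed, and what makes the threshold $\gamma_n^{1/2}$ work, is a sharper \emph{rate}, not a sharper constant. Theorem \ref{l_q}(i) gives, with probability tending to one, $\sup_i\|\hat\Theta_i-\Theta_i^0\|_1\le 8S^*\gamma_n/\inf_i\kappa^2(T_0^i,V_n^i)$, i.e.\ an $O_p(\gamma_n)$ bound rather than $O_p(\gamma_n^{1/2})$. Since $\|\cdot\|_\infty\le\|\cdot\|_1$, on the intersection of this event with the Assumption \ref{matrix2} event $\{\inf_i\kappa(T_0^i,V_n^i)>\delta\}$ you get $\sup_i\|\hat\Theta_i-\Theta_i^0\|_\infty\le 8S^*\gamma_n/\delta^2$, and because $\gamma_n\to0$ this is eventually below $\gamma_n^{1/2}$ (it suffices that $\gamma_n^{1/2}\le\delta^2/(8S^*)$, true for all large $n$). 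With $\eta_n$ bounded in this way, both of your steps go through uniformly in $i$ with no constant comparison: for $j\notin T_0^i$, $|\hat\Theta_{ij}|\le\eta_n\le\gamma_n^{1/2}$, hence $j\notin\hat T_n^i$; and for $j\in T_0^i$, $|\hat\Theta_{ij}|\ge K_2-\eta_n>\gamma_n^{1/2}$ eventually. For what it is worth, the paper's own proof simply asserts $P(\sup_i\|\hat\Theta_i-\Theta_i^0\|_\infty>\gamma_n^{1/2})\to0$ ``by Theorem \ref{l_2}'' and is loose on exactly the point you flagged; the clean justification is the $\ell_1$ (or any finite-$q$) bound of Theorem \ref{l_q}, under which the error is $o(\gamma_n^{1/2})$ and the threshold sits strictly above it, as your last sentence demands.
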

\begin{proof}
We have that 
\[
\lim_{n \rightarrow \infty} 
P\left(\sup_{1 \leq i \leq p_n} \|\hat{\Theta}_i - \Theta_i^0\|_\infty > \gamma_n^{\frac{1}{2}}\right) = 0
\]
by Theorem \ref{l_2}.
Therefore, it is sufficient to show that the next inequality
\[
\sup_{1 \leq i \leq p_n} \|\hat{\Theta}_i - \Theta_i^0 \|_\infty \leq \gamma_n^{\frac{1}{2}}
\]
implies that 
\[\hat{T}_n^i = T_0^i,\quad {\rm for\ all}\ i =1,2,\ldots,p_n.\]
For every $j \in T_0^i$, it follows from the triangle inequality that 
\[|\Theta_{ij}^0| - |\hat{\Theta}_{ij}| \leq |\hat{\Theta}_{ij} - \Theta_{ij}^0| \leq \gamma_n^{\frac{1}{2}}.\]
Then, we have that
\[
|\hat{\Theta}_{ij}| \geq |\Theta_{ij}^0|-\gamma_n^{\frac{1}{2}} > \gamma_n^{\frac{1}{2}}
\]
for sufficiently large $n$, which implies that $T_0^i \subset \hat{T}_n^i$ for every $i \in \{1,2,\ldots,p_n\}$. 
On the other hand, for every $j \in T_0^{ic}$, we have that 
\[
|\hat{\Theta}_{ij} - \Theta_{ij}^0| = |\hat{\Theta}_{ij}| \leq \gamma_n
\]
since it holds that $\Theta_{ij}^0 = 0$.
Then, we can see that $j \in \hat{T}_n^{ic}$ which implies that 
$\hat{T}_n^i \subset T_0^i$ for every $i \in \{1,2,\ldots,p_n\}$.
We thus obtain the conclusion.
\qed
\end{proof}
\subsection{New estimator for drift coefficients after variable selection}
We construct the new estimator $\hat{\Theta}_i^{(2)}$ by the solution
to the next equation
\begin{equation}\label{Q-MLE}
\psi_n(\Theta_{i \hat{T}_n^i})_{\hat{T}_n^i} = 0,\quad \Theta_{i \hat{T}_n^{ic}} = 0.
\end{equation}
We will prove the asymptotic normality of the estimator 
$\hat{\Theta}_{i \hat{T}_n^i}^{(2)}$ for every $i \in \{1,2,\ldots,p_n\}$. 
To do so, we define the $S_i \times S_i$ matrix $Q_{T_0^i,T_0^i}^i$ by 
\[
Q_{T_0^i, T_0^i}^i :=
\frac{1}{(\sigma_i^0)^2}\int_{\mathbb{R}^{S_i}} \phi(x)_{T_0^i} \phi(x)_{T_0^i}^T \mu_0^i(dx).
\]
Hereafter, we assume that this matrix $Q_{T_0^i, T_0^i}^i$ is invertible. 
The next lemma states that $V_{n T_0^i,T_0^i}^i$ is approximated by 
$Q_{T_0^i, T_0^i}^i$ with large probability for sufficiently large $n$.
\begin{lem}\label{hessianapp}
Under Assumption \ref{regularity}, the random sequence $\epsilon_n^i$ defined by 
\[
\epsilon_n^i := \|V_{n T_0^i,T_0^i}^i - Q_{T_0^i, T_0^i}^i\|_\infty,\quad i \in \{1,2,\ldots,p_n\}
\]
converges to $0$ in probability.
\end{lem}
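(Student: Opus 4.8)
The plan is to reduce the matrix-norm statement to an entrywise one and then treat each entry by a three-step argument: remove the estimated diffusion coefficient, pass from the discrete Riemann sum to a continuous-time occupation-time average, and finally invoke ergodicity. Since $T_0^i$ has at most $S^*$ elements, the submatrices involved have at most $(S^*)^2$ entries, a number that does not grow with $n$; hence it suffices to show that for each $l,m \in T_0^i$ the entry
\[
\left(V_{n T_0^i,T_0^i}^i\right)_{lm} - \left(Q_{T_0^i,T_0^i}^i\right)_{lm}
= \frac{1}{n\hat{\sigma}_i^2}\sum_{k=1}^n \phi_l(X_{t_{k-1}^n}^l)\phi_m(X_{t_{k-1}^n}^m) - \frac{1}{(\sigma_i^0)^2}\int \phi_l(x_l)\phi_m(x_m)\,\mu_0^i(dx)
\]
tends to zero in probability, and then take the maximum over these finitely many entries.

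For the first step I would split off the factor $\hat{\sigma}_i^{-2}$ by writing the difference as $\big(\hat{\sigma}_i^{-2} - (\sigma_i^0)^{-2}\big)\tfrac1n\sum_k \phi_l\phi_m$ plus $(\sigma_i^0)^{-2}$ times the remaining average. By Assumption \ref{regularity}$(ii)$ we have $|\phi_l\phi_m| \le L^2$, so the empirical average $\tfrac1n\sum_k\phi_l(X_{t_{k-1}^n}^l)\phi_m(X_{t_{k-1}^n}^m)$ is uniformly bounded, while Theorem \ref{diffusion} and the remark following it give $\hat{\sigma}_i^{-2}\rightarrow^p(\sigma_i^0)^{-2}$; the first term therefore vanishes in probability and it remains to handle the empirical average alone.

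The core is the second step. Setting $g(x):=\phi_l(x_l)\phi_m(x_m)$, I would rewrite $\tfrac1n\sum_{k=1}^n g(X_{t_{k-1}^n}) = \tfrac1{T_n}\sum_{k=1}^n\int_{t_{k-1}^n}^{t_k^n}g(X_{t_{k-1}^n})\,ds$ and compare it to the occupation-time average $\tfrac1{T_n}\int_0^{T_n}g(X_s)\,ds$. The discretization error is $\tfrac1{T_n}\sum_k\int_{t_{k-1}^n}^{t_k^n}\big(g(X_{t_{k-1}^n})-g(X_s)\big)\,ds$; since $g$ is a product of bounded Lipschitz functions it satisfies $|g(x)-g(y)|\le LL'(|x_l-y_l|+|x_m-y_m|)$, so by the increment moment bound $E[\sup_{s\in[t_{k-1}^n,t_k^n]}|X_s^l-X_{t_{k-1}^n}^l|]\le C_1\Delta_n^{1/2}$ recorded after Assumption \ref{regularity}, the expected error is $O(\Delta_n^{1/2})\to 0$, hence it is $o_p(1)$. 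Because $g$ depends only on the coordinates in $T_0^i$ and $\{X_{t T_0^i}\}$ is ergodic with invariant law $\mu_0^i$ by Assumption \ref{regularity}$(v)$, the continuous-time ergodic theorem yields $\tfrac1{T_n}\int_0^{T_n}g(X_s)\,ds\rightarrow^p\int g\,d\mu_0^i$ as $T_n=n\Delta_n\to\infty$; boundedness of $g$ ensures $g\in L^1(\mu_0^i)$ so the theorem applies.

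Combining the three steps gives the convergence of each entry to zero in probability, and the maximum over the at most $(S^*)^2$ entries then yields $\epsilon_n^i\rightarrow^p 0$. The step I expect to be the main obstacle is reconciling the Riemann-sum discretization with the ergodic average under the competing rate constraints $\Delta_n\to0$ and $n\Delta_n\to\infty$ of Assumption \ref{regularity}$(i)$: the ergodic theorem governs the continuous average over $[0,T_n]$, whereas the estimator only sees the skeleton $\{X_{t_{k-1}^n}\}$, and it is precisely the $O(\Delta_n^{1/2})$ Lipschitz bound that bridges the two and makes the argument go through.
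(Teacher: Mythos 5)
Your proposal is correct and follows essentially the same route as the paper's proof: the same three-term decomposition into a Riemann-sum discretization error (handled via the Lipschitz property of $\phi_l\phi_m$ and the increment moment bound, giving $O(\Delta_n^{1/2})$), the ergodic theorem for the continuous-time occupation average, and the consistency of $\hat{\sigma}_i$ from Theorem \ref{diffusion}. The only cosmetic difference is that you factor out $\hat{\sigma}_i^{-2}-(\sigma_i^0)^{-2}$ at the start, which lets you avoid the paper's device of intersecting with the event $\{\sup_i \hat{\sigma}_i^{-2}\leq \tilde{K}_1\}$ when applying Markov's inequality to the discretization term.
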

\begin{proof}
Note that 
\[
V_{n T_0^i,T_0^i}^i = \frac{1}{n\hat{\sigma}_i^2} \sum_{k=1}^n 
\phi(X_{t_{k-1}^n T_0^i})_{T_0^i} \phi(X_{t_{k-1}^n T_0^i})_{T_0^i}^T.
\]
It holds that 
\[
\epsilon_n^i \leq (I) + (II) + (III),
\]
where 
\[
(I) := \left\|
V_{n T_0^i, T_0^i}^i - \frac{1}{T_n \hat{\sigma}_i^2} 
\int_0^{T_n} \phi(X_{t T_0^i})_{T_0^i} \phi(X_{t T_0^i})_{T_0^i}^T dt
\right\|_\infty,
\]
\[
(II) :=  \left\|
\frac{1}{T_n \hat{\sigma}_i^2} \int_0^{T_n} \phi(X_{t T_0^i})_{T_0^i} \phi(X_{t T_0^i})_{T_0^i}^T dt - \frac{1}{\hat{\sigma}_i^2} 
\int_{\mathbb{R}^{S_i}} \phi(x)_{T_0^i} \phi(x)_{T_0^i}^T \mu_0^i(dx)
\right\|_\infty
\]
and 
\[
(III) := \left\|
\frac{1}{\hat{\sigma}_i^2} \int_{\mathbb{R}^{S_i}} \phi(x)_{T_0^i} \phi(x)_{T_0^i}^T \mu_0^i(dx)
 - Q_{T_0^i, T_0^i}^i
\right\|_\infty.
\]
It is obvious that $(II)$ and $(III)$ are $o_p(1)$ by 
Assumption \ref{regularity} and Theorem \ref{diffusion}.
To complete the proof, it is sufficient to prove that
\[
P \left( \left\|
V_{n T_0^i, T_0^i}^i - \frac{1}{T_n \hat{\sigma}_i^2} 
\int_0^{T_n} \phi(X_{t T_0^i})_{T_0^i} \phi(X_{t T_0^i})_{T_0^i}^T dt
\right\|_\infty
\geq \delta\  {\rm and}\ 
 \sup_{1 \leq i \leq p_n}\hat{\sigma}_i^{-2} \leq \tilde{K}_1
\right) \rightarrow 0
\]
as $n \rightarrow \infty$ for every $\delta >0$. 
Using Markov's inequality, we can see that 
\begin{eqnarray*}
\lefteqn{
P \left(\left\|
V_{n T_0^i,T_0^i}^i
- \frac{1}{T_n \hat{\sigma}_i^2} 
\int_0^{T_n} \phi(X_{t T_0^i})_{T_0^i} \phi(X_{t T_0^i})_{T_0^i}^T dt
\right\|_\infty
\geq \delta\  {\rm and}\ 
 \sup_{1 \leq i \leq p_n}\hat{\sigma}_i^{-2} \leq \tilde{K}_1
\right)} \\
&\leq&
\frac{\tilde{K}_1}{n \Delta_n \delta} \sum_{k=1}^n 
\int_{t_{k-1}^n}^{t_k^n} E \left[\sup_{j,l \in T_0^i} 
|\phi_j(X_t^j) \phi_l(X_t^l) - \phi_j(X_{t_{k-1}^n}^j) \phi_l(X_{t_{k-1}^n}^l)|
\right] dt.
\end{eqnarray*}
Moreover, it follows from triangle inequality and Schwartz's inequality that
\begin{eqnarray*}
\lefteqn{
E \left[\sup_{j,l \in T_0^i} 
|\phi_j(X_t^j) \phi_l(X_t^l) - \phi_j(X_{t_{k-1}^n}^j) \phi_l(X_{t_{k-1}^n}^l)|
\right]}\\
&\leq&
E \left[\sup_{j,l \in T_0^i}|\phi_l(X_t^l) (\phi_j(X_t^j) - \phi_j(X_{t_{k-1}^n}^j))|\right]\\
&&
+
E \left[\sup_{j,l \in T_0^i}|\phi_j(X_{t_{k-1}^n}^j)(\phi_l(X_t^l) - \phi_l(X_{t_{k-1}^n}^l))| \right] \\
&\leq&
\left(
E \left[
\sup_{l \in T_0^i} |\phi_l(X_t^l)|^2
\right]
\right)^{\frac{1}{2}}
\left(
E\left[
\sup_{j \in T_0^i} |\phi_j(X_t^j) - \phi_j(X_{t_{k-1}^n}^j)|^2
\right]
\right)^{\frac{1}{2}} \\
&&
+
\left(
E \left[
\sup_{j \in T_0^i} |\phi_j(X_{t_{k-1}^n}^j)|^2
\right]
\right)^{\frac{1}{2}}
\left(
E\left[
\sup_{l \in T_0^i} |\phi_l(X_t^l) - \phi_l(X_{t_{k-1}^n}^l)|^2
\right]
\right)^{\frac{1}{2}} \\
&\leq& 2 S^* L L' \Delta_n^{\frac{1}{2}}.
\end{eqnarray*}
We thus have that 
\begin{eqnarray*}
P \left(\left\|
V_n^i
- \frac{1}{T_n \hat{\sigma}_i^2} \int_0^{T_n} X_t X_t^T dt
\right\|_\infty
\geq \delta\  {\rm and}\ 
 \sup_{1 \leq i \leq p_n}\hat{\sigma}_i^{-2} \leq \tilde{K}_1
\right)
&\leq&
\frac{2\tilde{K}_1 L L' S^*}{n \Delta_n \delta} \cdot  n \Delta_n^{\frac{3}{2}}.
\end{eqnarray*}
If we put $\delta = n^{-\eta}$ for $\eta \in (0, \alpha/2)$, 
then the right-hand-side of this inequality converges to $0$, 
which means that $(I) = o_p(1)$.
\qed
\end{proof}
Now, we are ready to prove the asymptotic normality of $\hat{\Theta}_{i \hat{T}_n^i}^{(2)}$ 
in the following sense.
\begin{thm}
It holds for every $i \in \mathbb{N}$ that  
\[
\sqrt{t_n^n} (\hat{\Theta}_{i \hat{T}_n^i}^{(2)} - \Theta_{i T_0^i}^0)
1_{\{\hat{T}_n^i = T_0^i\}} \rightarrow^d N(0,Q_{T_0^i,T_0^i}^{i -1})
\]
as $n \rightarrow \infty$. Note that for every $i \in \mathbb{N}$, it holds that 
$i < p_n$ for sufficiently large $n$.
\end{thm}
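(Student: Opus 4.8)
The plan is to exploit the linearity of $\psi_n$ in $\Theta_i$ so that, on the event $\{\hat{T}_n^i = T_0^i\}$, the defining equation (\ref{Q-MLE}) can be solved in closed form. On this event the relevant part of $\hat{\Theta}_{i\hat{T}_n^i}^{(2)}$ coincides with the restricted quasi-MLE on the true support,
\[
\tilde{\Theta}_i := \left(\sum_{k=1}^n \phi(X_{t_{k-1}^n})_{T_0^i}\phi(X_{t_{k-1}^n})_{T_0^i}^T \Delta_n\right)^{-1}\sum_{k=1}^n \phi(X_{t_{k-1}^n})_{T_0^i}(X_{t_k^n}^i - X_{t_{k-1}^n}^i),
\]
which is well defined for large $n$ because $V_{nT_0^i,T_0^i}^i \rightarrow^p Q_{T_0^i,T_0^i}^i$ by Lemma \ref{hessianapp} and $Q_{T_0^i,T_0^i}^i$ is invertible. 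Substituting the true dynamics $X_{t_k^n}^i - X_{t_{k-1}^n}^i = (\Theta_{iT_0^i}^0)^T\phi(X_{t_{k-1}^n})_{T_0^i}\Delta_n + R_k^i + \sigma_i^0\Delta W_k^i$, where $R_k^i := \int_{t_{k-1}^n}^{t_k^n}(\Theta_i^0)^T(\phi(X_s)-\phi(X_{t_{k-1}^n}))ds$ and $\Delta W_k^i := W_{t_k^n}^i - W_{t_{k-1}^n}^i$, and using $\sum_k\phi(X_{t_{k-1}^n})_{T_0^i}\phi(X_{t_{k-1}^n})_{T_0^i}^T\Delta_n = t_n^n\hat{\sigma}_i^2 V_{nT_0^i,T_0^i}^i$, I would obtain the basic decomposition
\[
\sqrt{t_n^n}(\tilde{\Theta}_i - \Theta_{iT_0^i}^0) = \frac{1}{\hat{\sigma}_i^2}(V_{nT_0^i,T_0^i}^i)^{-1}(B_n^i + \Sigma_n^i),
\]
with bias term $B_n^i := (t_n^n)^{-1/2}\sum_k\phi(X_{t_{k-1}^n})_{T_0^i}R_k^i$ and score term $\Sigma_n^i := \sigma_i^0 (t_n^n)^{-1/2}\sum_k\phi(X_{t_{k-1}^n})_{T_0^i}\Delta W_k^i$.

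The bias term is dispatched by a direct moment bound of the type already used in Lemma \ref{drift1}: the Lipschitz property of $\phi$ together with Assumption \ref{regularity}(iii) gives $E[\|\phi(X_{t_{k-1}^n})_{T_0^i}R_k^i\|]\lesssim \Delta_n^{3/2}$, so that $E\|B_n^i\| \lesssim (t_n^n)^{-1/2}n\Delta_n^{3/2} = \sqrt{n\Delta_n^2}$, which tends to $0$ since $n\Delta_n^2\to0$; hence $B_n^i\rightarrow^p 0$ by Markov's inequality.

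For the score term I would invoke a martingale central limit theorem for the $\mathbb{R}^{S_i}$-valued array $\zeta_k := \sigma_i^0 (t_n^n)^{-1/2}\phi(X_{t_{k-1}^n})_{T_0^i}\Delta W_k^i$, which is a martingale-difference array for $\{\mathcal{F}_{t_k^n}\}$ since $\phi(X_{t_{k-1}^n})$ is $\mathcal{F}_{t_{k-1}^n}$-measurable and $\Delta W_k^i$ is centered and independent of $\mathcal{F}_{t_{k-1}^n}$. Its predictable quadratic variation is $(\sigma_i^0)^2 n^{-1}\sum_k\phi(X_{t_{k-1}^n})_{T_0^i}\phi(X_{t_{k-1}^n})_{T_0^i}^T = (\sigma_i^0)^2\hat{\sigma}_i^2 V_{nT_0^i,T_0^i}^i$, which converges in probability to $(\sigma_i^0)^4 Q_{T_0^i,T_0^i}^i$ by Lemma \ref{hessianapp} and Theorem \ref{diffusion}; the conditional Lyapunov condition follows from $\sum_k E[\|\zeta_k\|^4|\mathcal{F}_{t_{k-1}^n}]\lesssim n\Delta_n^2/(t_n^n)^2 = 1/n\to0$, using the boundedness of $\phi$ and $E[(\Delta W_k^i)^4]=3\Delta_n^2$. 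Hence $\Sigma_n^i\rightarrow^d N(0,(\sigma_i^0)^4 Q_{T_0^i,T_0^i}^i)$.

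Finally I would assemble the pieces by Slutsky's theorem, writing $Q:=Q_{T_0^i,T_0^i}^i$ for brevity. Since $\hat{\sigma}_i^{-2}(V_{nT_0^i,T_0^i}^i)^{-1}\rightarrow^p (\sigma_i^0)^{-2}Q^{-1}$ and $B_n^i\rightarrow^p 0$, the scaled error converges in law to $(\sigma_i^0)^{-2}Q^{-1}N(0,(\sigma_i^0)^4 Q)$, whose covariance equals $(\sigma_i^0)^{-4}Q^{-1}((\sigma_i^0)^4 Q)Q^{-1}=Q^{-1}=Q_{T_0^i,T_0^i}^{i-1}$ by symmetry of $Q$. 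To pass from $\tilde{\Theta}_i$ to $\hat{\Theta}_{i\hat{T}_n^i}^{(2)}1_{\{\hat{T}_n^i=T_0^i\}}$, I note that for each fixed $i$ one has $i<p_n$ for large $n$ and $P(\hat{T}_n^i=T_0^i)\to1$ by Theorem \ref{selection}, so $1_{\{\hat{T}_n^i=T_0^i\}}\rightarrow^p 1$ and the two quantities agree on an event of probability tending to one; a further application of Slutsky yields the claim. I expect the martingale central limit step to be the main obstacle, and within it the convergence of the predictable quadratic variation, where the discrete ergodic average must be controlled; however, Lemma \ref{hessianapp} already supplies essentially this convergence, so the remaining work is the verification of the Lyapunov condition and the bookkeeping of the $\sigma$-factors ensuring the limiting covariance collapses to $Q_{T_0^i,T_0^i}^{i-1}$.
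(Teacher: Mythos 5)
Your proposal is correct and follows essentially the same route as the paper: the paper's Taylor expansion of the (linear) score is exactly your closed-form solution, and both arguments then split the scaled score into a discretization-bias term of order $\sqrt{n\Delta_n^2}$ and a martingale term handled by the martingale central limit theorem with a Lyapunov condition, invoke Lemma \ref{hessianapp} for the convergence of the Gram/Hessian matrix, and finish with Theorem \ref{selection} and Slutsky's theorem. The only cosmetic difference is the bookkeeping of the $\sigma$-factors: the paper replaces $\hat{\sigma}_i^2$ by $(\sigma_i^0)^2$ inside the score (its term $(II)$), whereas you carry $\hat{\sigma}_i^{-2}(V_{nT_0^i,T_0^i}^i)^{-1}$ to the end and cancel the powers of $\sigma_i^0$ in the limiting covariance.
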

\begin{proof}
Using Taylor expansion, we have that 
\[
\psi_n(\hat{\Theta}_{i \hat{T}_n^i}^{(2)})_{\hat{T}_n^i}
= \psi_n(\Theta_{i \hat{T}_n^i}^0) - V_{n \hat{T}_n^i \hat{T}_n^i}^i 
(\hat{\Theta}_{n \hat{T}_n^i}^{(2)} - \Theta_{i \hat{T}_n^i}^0).
\]
It follows from the definition of the estimator $\hat{\Theta}_i^{(2)}$ that 
\[
\sqrt{t_n^n} V_{n \hat{T}_n^i \hat{T}_n^i}^i  (\hat{\Theta}_{i \hat{T}_n^i}^{(2)} - \Theta_{i T_0^i}^0)
1_{\{\hat{T}_n^i = T_0^i\}}
= \sqrt{t_n^n} \psi_n(\Theta_{i T_0^i}^0)_{T_0^i} 1_{\{\hat{T}_n^i = T_0^i\}}.
\]
We can decompose that $\sqrt{t_n^n} \psi_n(\Theta_{i T_0^i}^0)_{T_0^i} = (I) + (II) + (III)$, where 
\[
(I) = \frac{\sigma_i^0}{\sqrt{n \Delta_n}\hat{\sigma}_i^2}
\sum_{k=1}^n \phi(X_{t_{k-1}^n T_0^i})_{T_0^i} 
\int_{t_{k-1}^n}^{t_k^n} (\Theta_{iT_0^i}^0)^T \{\phi(X_{s T_0^i})_{T_0^i} - \phi(X_{t_{k-1}^n T_0^i})_{T_0^i} \} ds,
\] 
\[(II) =
\left(\frac{\sigma_i^0}{\sqrt{n \Delta_n} \hat{\sigma}_i^2} 
- \frac{1}{\sqrt{n \Delta_n} \sigma_i^0}\right)
\sum_{k=1}^n \phi(X_{t_{k-1}^n T_0^i})_{T_0^i} (W_{t_k^n}^i - W_{t_{k-1}^n}^i)
\]
and 
\[(III) =
\frac{1}{\sqrt{n \Delta_n} \sigma_i^0}
\sum_{k=1}^n \phi(X_{t_{k-1}^n T_0^i})_{T_0^i} (W_{t_k^n}^i - W_{t_{k-1}^n}^i)
\]
We can show that $(I) = o_p(1)$ by the similar way to the proof of Lemma \ref{drift1}.
Next, we will apply the martingale central limit theorem for $(III)$.
Define the martingale differences $\{\xi_k\}_{k=1,2,\ldots,n}$ by
\[
\xi_k := \frac{1}{\sqrt{n \Delta_n} \sigma_i^0}  \phi(X_{t_{k-1}^n T_0^i})_{T_0^i} (W_{t_k^n}^i - W_{t_{k-1}^n}^i).
\]
It holds for every $j,l \in T_0^i$ that
\begin{eqnarray*}
\lefteqn{
\sum_{k=1}^n E\left[
\frac{1}{\sqrt{n \Delta_n (\sigma_i^0)^2}}
\phi_j(X_{t_{k-1}^n}^j) \phi_l(X_{t_{k-1}^n}^l) | \mathcal{F}_{t_{k-1}^n}
\right]
}\\
&=&
\frac{1}{n \Delta_n (\sigma_i^0)^2}
\sum_{k=1}^n 
\phi_j(X_{t_{k-1}^n}^j) \phi_l(X_{t_{k-1}^n}^l) E[(W_{t_k^n}^i - W_{t_{k-1}^n}^i)^2]\\
&=&
\frac{1}{n (\sigma_i^0)^2}
\sum_{k=1}^n \phi_j(X_{t_{k-1}^n}^j) \phi_l(X_{t_{k-1}^n}^l).
\end{eqnarray*}
We can see that right-hand side converges to the $(j,l)$-component of the matrix $Q_{T_0^i T_0^i}^i$ in probability
by the same way of the proof of Lemma \ref{hessianapp}.
Moreover, we can check the Lyapnov's condition: 
\[
\sum_{k=1}^n E \left[
\|\xi_k\|_2^{2 + \delta} | \mathcal{F}_{t_{k-1}^n}
\right] \rightarrow^p 0
\]
for $\delta = 2$, which implies Lindeberg's condition: 
\[
\sum_{k=1}^n E \left[
\|\xi_k\|_2^2 1_{\{\|\xi_k\|_2 > \epsilon\}} | \mathcal{F}_{t_{k-1}^n}
\right] \rightarrow^p 0,\
\]
for every $\epsilon > 0$.
Then, we obtain that
\[
\frac{1}{\sqrt{n \Delta_n} \sigma_i^0}
\sum_{k=1}^n \phi(X_{t_{k-1}^n T_0^i})_{T_0^i} (W_{t_k^n}^i - W_{t_{k-1}^n}^i)
\rightarrow^d N(0, Q_{T_0^i T_0^i}^i)
\]
by martingale central limit theorem.
Noting that 
\[
(II) = 
\left(\frac{(\sigma_i^0)^2}{\hat{\sigma}_i^2}-1
\right) \frac{1}{\sqrt{n \Delta_n} \sigma_i^0}
\sum_{k=1}^n \phi(X_{t_{k-1}^n T_0^i})_{T_0^i} (W_{t_k^n}^i - W_{t_{k-1}^n}^i)
\]
and $(III) = O_p(1)$,
we obtain that $(II) = o_p(1)$ since $\hat{\sigma}_i$ is a consistent estimator for $\sigma_i^0$.
Using the above results and Lemma \ref{hessianapp}, we have that 
\[
\sqrt{t_n^n} (\hat{\Theta}_{i \hat{T}_n^i}^{(2)} - \Theta_{i T_0^i}^0)
1_{\{\hat{T}_n^i = T_0^i\}}
= Q_{T_0^i T_0^i}^{i -1} \sqrt{t_n^n} \psi_n(\Theta_{i T_0^i}^0)_{T_0^i} 1_{\{\hat{T}_n^i = T_0^i\}}
+ o_p(1).
\]
Since it holds that $1_{\{\hat{T}_n^i = T_0^i\}} \rightarrow^p 1$ by Theorem \ref{selection}, we can use Slutsky's theorem to derive our conclusion.
\qed
\end{proof}
\section{Concluding remarks}
In summary, we can construct the asymptotically good estimator for our model even in high-dimensional settings if the sparsity of the parameter is fixed or bounded. 
If the sparsity is not bounded, we may not reduce the dimension of the parameter.
In such cases, the asymptotically normal estimator can not be constructed by the equation 
(\ref{Q-MLE}).

In this paper, we assume that the diffusion coefficients $\sigma_i$'s are constants.
However, it may be possible to consider the case when each $\sigma_i$ has more
complicated structures. For example, we can consider the following model:
\[
X_t^i = X_0^i + \int_0^t \Theta_i^T \phi(X_s) ds + \int_0^t \exp(\beta_i^T \varphi(X_s)) dW_s^i, 
\quad i = 1,2,\ldots,p,
\]
where $\beta_i \in \mathbb{R}^p$ and $\varphi(\cdot)$ is an appropriate smooth function.
According to 
\cite{fujimori-nishiyama2}, we can construct estimators for $\beta_i$ by 
the Dantzig selector and prove the $l_q$ consistency of the estimators for every $q \in [1,\infty].$ Therefore, we may prove the same asymptotic properties of $\Theta$ even for the 
above model which has high-dimensional parameters in diffusion coefficients. 

Besides, variable selection consistency of the estimator of drift matrix is important for 
applications such as graphical modeling which can be seen in 
\cite{wainwright} 
and 
\cite{periera-ibrahimi}.
In future, we would like to consider such applications and present some numerical results. 
\vskip 20pt
{\bf Acknowledgements.}
The author would like to express the appreciation to Prof.\ Y.\ Nishiyama of Waseda University and Dr.\ K.\ Tsukuda of the University of Tokyo for long hours discussions about this paper. 

\end{document}